\pgfplotsset{compat=1.15}
\numberwithin{equation}{section}
\newcommand{\C}{\mathbb{C}}
\newcommand{\R}{\mathbb{R}}
\newcommand{\Z}{\mathbb{Z}}
\newcommand{\N}{\mathbb{N}}
\newcommand{\qbinom}[2]{\begin{bmatrix}#1\\#2\end{bmatrix}}
\newcommand{\la}{\lambda}
\newtheoremstyle{rem}{3pt}{3pt}{}{}
{\bfseries}{.}{.5em}{}
\newtheorem{theo}{Theorem}[section]
\newtheorem*{theo*}{Theorem}
\newtheorem{lemm}[theo]{Lemma}
\newtheorem{prop}[theo]{Proposition}
\newtheorem{coro}[theo]{Corollary}
\theoremstyle{rem}
\newenvironment{rema}
  {\pushQED{\qed}\remax}
  {\popQED\endremax}
\theoremstyle{definition}
\newtheorem*{term*}{Notation/Terminology}
\title{Little $q$-Jacobi polynomials and symmetry breaking operators for $U_q(sl_2)$}
\author{Q. Labriet\footnote{ Centre de Recherches Math\'ematiques, Universit\'e de Montr\'eal, P.O. Box 6128, Montréal, Canada. \emph{email adress}: quentin.labriet@umontreal.ca}, L. Poulain d'Andecy\footnote{Laboratoire de math\'ematiques de Reims LMR, UMR 9008, Universit\'e de Reims Champagne-Ardenne, Moulin de la Housse BP 1039, 51100 Reims, France. \emph{email adress}: loic.poulain-dandecy@univ-reims.fr}}
\date{}
\begin{document}
\maketitle

\begin{abstract}
This paper presents explicit formulas for intertwining operators of the quantum group $U_q(sl_2)$ acting on tensor products of Verma modules. We express a first set of intertwining operators (the \emph{holographic operators}) in terms of the little $q$-Jacobi polynomials, and we obtain for the dual set (the \emph{symmetry breaking operators}) a $q$-deformation of the Rankin--Cohen operators. The Verma modules are realised on polynomial spaces and, interestingly, we find along the way the need to work with non-commuting variables. Explicit connections are given with the Clebsch--Gordan coefficients of $U_q(sl_2)$ expressed with the $q$-Hahn polynomials.
\end{abstract}
\section{Introduction}

The main purpose of this paper is to study the intertwining operators associated to the following decomposition of representations of the quantum group $U_q(sl_2)$:
\[V_{\lambda}\otimes V_{\lambda'}\cong \bigoplus_{n\geq 0} V_{\lambda+\lambda'+2n}\,,\]
where $V_{\lambda}$ is the lowest weight Verma module of weight $\lambda$, and to show the appearance of orthogonal polynomials from the $q$-Askey scheme and of $q$-differential operators (operators involving the $q$-derivative). In a nutshell, we will show how the family, on $n$, of intertwining operators 
\begin{equation}\label{introholographic}\Psi^{\lambda,\lambda'}_n\ :\ \ V_{\lambda+\lambda'+2n}\to V_{\lambda}\otimes V_{\lambda'},
\end{equation}
involves the little $q$-Jacobi polynomials, while the dual intertwining operators
\begin{equation}\label{introsymmetrybreaking}\operatorname{q-RC}^{\lambda,\lambda'}_n\ :\ \ (V_{\lambda}\otimes V_{\lambda'})^{\star} \to V_{\lambda+\lambda'+2n}^{\star}, \end{equation}
are given by $q$-deformations of Rankin--Cohen brackets. In some context, the operators in \eqref{introsymmetrybreaking} are called \emph{symmetry breaking} operators while those in \eqref{introholographic} are called \emph{holographic} operators \cite{KobaPevznerIII}. Their link with little $q$-Jacobi polynomials generalises the appearance in the same context for $sl_2$ of the classical Jacobi polynomials and of the Rankin--Cohen operators \cite{LabrietPoulain23,LabrietPoulain24}. However, we will see that the $q$-deformed situation involves new features, not least among them the necessity of working with non-commuting variables.

To put our results into perspective, let us briefly recall the classical $sl_2$ situation. First of all, Verma modules are realised on the space of polynomials in one variable. Therefore, the intertwining operator (\ref{introholographic}) becomes a map between polynomial spaces and it turns out to be possible to give it as follows (see \cite{LabrietPoulain23}):
\begin{equation}\label{introPsi}
\begin{array}{lrcl}\Psi^{\lambda,\lambda'}_n\ :\ & \C[z] & \to & \C[x]\otimes \C[y]\cong \C[t,tv] \\[0.5em]
 & z^k & \mapsto & t^{k+n}P_n^{(\lambda-1,\lambda'-1)}(v)\end{array}\ , \end{equation}
where the polynomials $P_n^{(\lambda-1,\lambda'-1)}(v)$ are the classical Jacobi polynomials, and $t$ and $v$ are given by $t=x+y$ and $v=\frac{y-x}{x+y}$. The main feature of this change of variables is that the polynomials $t^nP_n^{(\lambda-1,\lambda'-1)}(v)$, $n\geq0$, are the lowest weight vectors of the representations $V_{\lambda+\lambda'+2n}$ appearing in the decomposition of the tensor product, while for a fixed $n$, the diagonal action of $U(sl_2)$ acts on the variable $t$ on the set of polynomials $\{t^{k+n}P_n^{(\lambda-1,\lambda'-1)}(v)\}_{k\geq0}$.

Then, passing to the duals, the intertwining operators (\ref{introsymmetrybreaking}) are realised as bidifferential operators (see \cite{LabrietPoulain24}):
\begin{equation}\label{introRC}
\begin{array}{lrcl}RC^{\lambda,\lambda'}_n\ :\ & \C[x]\otimes \C[y] & \to & \C[z] \\[0.5em]
 & f\otimes g & \mapsto & RC^{\lambda,\lambda'}_n(f,g)\end{array}\ .\end{equation}
This can be done through a suitable identification of $V_{\lambda}^{\star}$ with a polynomial space and the resulting operators $RC^{\lambda,\lambda'}_n$ turn out to coincide with the so-called Rankin--Cohen brackets from the world of modular forms \cite{Cohen75}. Roughly speaking, the connections between (\ref{introPsi}) and (\ref{introRC}) can be expressed by the fact that the symbols of the bidifferential operators $RC^{\lambda,\lambda'}_n$ are the two variable polynomial in $(x,y)$ obtained from the Jacobi polynomials in (\ref{introPsi}).

\vskip .2cm
The present paper generalises the above situation to the case of the quantum group $U_q(sl_2)$. On the one hand, we find that the role of the classical Jacobi polynomials is played by the little $q$-Jacobi polynomials, which are well-known $q$-deformations of the Jacobi polynomials in the $q$-version of the Askey scheme \cite{KLS10}. On the other hand, we find for the intertwining operators $RC^{\lambda,\lambda'}_n$ explicit $q$-deformations of the Rankin--Cohen brackets. We obtain formulas similar to the ones found in \cite{Ros} but the approach through the little $q$-Jacobi polynomials is different. Note that connections between the little $q$-Jacobi polynomials and $U_q(sl_2)$ are known, such as the fact that they appear as matrix elements (see \cite{Koo90} and references therein) or via representations of the Askey--Wilson algebra \cite{BMVZ20}. However the appearance of the little $q$-Jacobi polynomials presented here is different and seems to be new.

An interesting feature of the quantum setting appears immediately when one tries to generalise the change of variables in (\ref{introPsi}). Indeed in (\ref{introPsi}) all subrepresentations of the tensor product were obtained upon multiplying by the variable $t$ a given lowest weight vector. The multiplication by $t$ played the role, for all subrepresentations, of the raising operator moving from the lowest weight vector to vectors of higher weights. It turns out that in the quantum setting, such a feature can appear only if we impose for the variables $x$ and $y$ to be $q$-commuting instead of simply commuting. Therefore, all our study here will involve the so-called quantum plane in two variables instead of classical polynomials in two variables. 

The explicit decomposition of tensor products involves the Clebsch--Gordan coefficients, which are known for $U_q(sl_2)$ to be given in terms of $q$-Hahn polynomials (also polynomials in the $q$-version of the Askey scheme \cite{KLS10}). As a byproduct of our construction, we exhibit explicit connections between the little $q$-Jacobi polynomials and the $q$-Hahn polynomials, in the spirit for example of \cite{KVJ98,LabrietPoulain23}. We also find a realisation of the $q$-Hahn algebra in terms of the $q$-difference operator of the little $q$-Jacobi polynomials.

\vskip .2cm
The results presented here can be looked upon through the lens of the so-called F-method introduced by Kobayashi--Pevzner \cite{KobaPevznerI,KobaPevznerII}. Using this method for holomorphic discrete series representations of $\operatorname{SL}_2(\R)$, the symbols of Rankin--Cohen bidifferential operators are related to solutions of the hypergeometric differential equation and thus to Jacobi polynomials. The heart of the F-method relies on an algebraic Fourier transform defined as a morphism between two Weyl algebras. For example, in \cite{LabrietPoulain24} the algebraic Fourier transform was realised explicitly using the adjonction for the Fischer inner product on Verma modules. The F-method was used successfully for other Lie groups and other class of representations (see \cite{KKP16,KobaPevznerI,KobaPevznerII,KobaSpehI, KobaSpehII, KuboOrsted24,P-V23} for example) but as far as we know, it is a open question to extend it to the quantum group setting.

 Our results show that several features of the F-method may also exist in the quantum setting. First, the intertwining operators in (\ref{introholographic}) are related to solutions of a $q$-version of the hypergeometric differential equation, and that is how the little $q$-Jacobi polynomials appear. Then, to dualise the map in (\ref{introholographic}) into the intertwining operators in (\ref{introsymmetrybreaking}) we identify the contragredient representation spaces with the original ones via a $q$-deformation of the Fischer inner product. This could be seen as an instance of a $q$-version of the algebraic Fourier transform used in the F-method. In some sense the connection between the $q$-Rankin--Cohen brackets and the little $q$-Jacobi polynomials is made through a $q$-analogue of the notion of symbol of a differential operator (see Section \ref{sec5}).
Thus, the present results may be seen as a first step and a fundamental example towards the formulation and the understanding of an analog of the F-method in the quantum group setting.

\vskip .2cm
As was done in \cite{LabrietPoulain23,LabrietPoulain24}, once the two-fold tensor products are well understood, one may be interested in three-fold (and in general $n$-fold) tensor products. Indeed, the Racah problem for three-fold tensor products of $U_q(sl_2)$-representations involves the famous Askey--Wilson polynomials (see for example \cite{CFGPRV,Zhe91}). Based on the present study, interesting connections between little $q$-Jacobi polynomials, $q$-Rankin--Cohen brackets, Askey--Wilson polynomials and their Askey--Wilson algebra can be expected, and we leave this for a future work.

\paragraph{Organisation of the paper.} In Section \ref{sec2}, we introduce the algebra $U_q(sl_2)$ and its representations and we perform the first calculations of the lowest weight vectors in a tensor product. Here is explained the necessity of the $q$-commuting variables $x$ and $y$. In Section \ref{sec3}, we achieve the (generalisation in the quantum setting) of the separation of variables and show how the little $q$-Jacobi polynomials enter the stage. In Section \ref{sec4}, we make the connections with the Clebsch--Gordan coefficients for $U_q(sl_2)$, and show explicitly the appearance of the $q$-Hahn algebra and the connections between little $q$-Jacobi polynomials and $q$-Hahn polynomials. Finally, in Section \ref{sec5} we express the intertwining operators as explicit $q$-deformations of the classical Rankin--Cohen brackets.

\paragraph{Acknowledgements} Quentin Labriet acknowledges a postdoctoral fellowship from Luc Vinet’s NSERC discovery grant. Both authors also wish to thank the CRM for its hospitality. The authors also thank an anonymous referee for their valuable suggestions.

\paragraph{Notations}
All through the paper, $q$ is non-zero complex number which is not a root of unity (most of the paper can also be read with $q$ an indeterminate). Only in the last section we need $q$ to be a strictly positive real number. We use the following convention for $q$-numbers for $x\in \C$
\begin{equation}
[x]=\frac{q^x-q^{-x}}{q-q^{-1}}.
\end{equation}
We also introduce the following Pochhammer like notation together with $q$-factorials and $q$-binomial coefficients (from left to right)
\begin{equation*}
[x]_n=\prod_{s=0}^{n-1} [x+s],\ \ [n] !=[1]_n=\prod_{s=1}^{n} [s],\ \ \text{and}\ \ \qbinom{n}{k}=\frac{[n]!}{[k]![n-k]!}.
\end{equation*}
For $n\in \N$, the basic hypergeometric function ${}_{n+1}\phi_n$ is defined, for $a_1,\cdots, a_{n+1}\in \C$ and $b_1,\cdots b_n\in \C\backslash \Z_{\leq 0}$, by 
\begin{equation}\label{eq:HypergeoDef}
    {}_{n+1}\phi_n\left(\begin{array}{c} a_1,\cdots, a_{n+1}\\ b_1,\cdots ,b_n\end{array};q,x\right)=\sum_{k=0}^\infty \frac{(a_1;q)_k\cdots (a_{n+1};q)_k}{(b_1;q)_k\cdots (b_n;q)_k}\frac{z^k}{(q;q)_k},
\end{equation}
where
\begin{equation*}
    (a;q)_k=\prod_{i=1}^k(1-aq^{i-1}). 
\end{equation*}
In this paper, we restrict ourselves to the case where one of the $a_i$ is a negative integer so that the series is a finite sum and we do not have any convergence issues. 

Finally, we will use the following notation for $q$-derivatives for polynomials $f\in\C[z]$ and formal power series $f\in\C[[z]]$
\begin{equation*}
D_q f(z)=\frac{f(z)-f(qz)}{(1-q)z}.
\end{equation*}

\section{Tensor products of $U_q(sl_2)$-representations}\label{sec2}

\subsection{The Hopf algebra $U_q(sl_2)$ and its Verma modules}

The algebra $U_q(sl_2)$ is generated by $K,E,F$ with $K$ invertible and defining relations:
\[KEK^{-1}=q^2E\,,\ \ \ \ \ KFK^{-1}=q^{-2}F\,,\ \ \ \ \ [E,F]=\frac{K-K^{-1}}{q-q^{-1}}\ .\]
The coproduct and the antipode are given by
\[\Delta(K)=K\otimes K\,,\ \ \ \ \Delta(E)=E\otimes 1 + K\otimes E\,,\ \ \ \ \ \Delta(F)=F\otimes K^{-1} + 1\otimes F,\]
\[S(K)=K^{-1}\,,\ \ \ \ \ \ \ S(E)=-K^{-1}E\,,\ \ \ \ \ \ \ S(F)=-FK\ .\]
The Casimir element of $U_q(sl_2)$ is the central element 
\begin{equation}\label{eq:Casimir}
C=(q-q^{-1})^2\left(FE+\frac{qK+q^{-1}K^{-1}}{(q-q^{-1})^2}\right).
\end{equation}

\paragraph{Verma modules.} For $\lambda\in \C$, we denote $V_{\lambda}$ the representation of $U_q(sl_2)$ on the vector space $\mathbb{C}[z]$ of polynomials in one variable, where the action of the generators is
\begin{equation}\label{Verma}
\begin{array}{rcl}
K^{\pm 1} P(z)&=&q^{\pm\lambda}P(q^{\pm 2} z),\\[0.3em]
EP(z)&=&zP(z),\\[0.3em]
FP(z)&=&\displaystyle-\frac{q^{\lambda-1}(P(q^2z)-P(z))-q^{-\lambda+1}(P(z)-P(q^{-2}z))}{(q-q^{-1})^2z}.
\end{array}
\end{equation}
On the basis $\{z^n\}$ we get:
\begin{equation}\label{repbasisformula}
\begin{array}{rcl}
K^{\pm 1} z^n&=&q^{\pm(\lambda+2n)}z^n,\\[0.3em]
E z^n &=&z^{n+1},\\[0.3em]
F z^n&= &-[\lambda+n-1][n] z^{n-1}.
\end{array}
\end{equation}
One easily checks that this gives a representation of $U_q(sl_2)$ and that the Casimir operator acts as a number, namely, $Cz^n=(q^{\lambda-1}+q^{-\lambda+1})z^n$ for any $n\geq0$. The representation $V_{\lambda}$ is the lowest weight Verma module associated to the weight $\lambda$. It is irreducible for $\lambda\in \C\backslash\Z_{\leq 0}$.

\begin{rema}
An equivalent presentation of $U_q(sl_2)$, used for example in \cite{CFGPRV,Ros}, is found by setting $\tilde{K}=K^{1/2}$, $X_+=E\tilde{K}^{-1}$ and $X_-=\tilde{K}F$. Note that the coproduct becomes more symmetric:
\[\Delta(X_{\pm})=X_{\pm}\otimes \tilde{K}^{-1} + \tilde{K}\otimes X_{\pm}\ .\]
However, we choose the presentation with $K,E,F$ above in order to have in the simplest way a result like in Proposition \ref{propdeltaEanddecomposition} below. 
\end{rema}

\subsection{Tensor product of two Verma modules}

From now on and until the end of the paper we fix $\lambda,\lambda'\in \C\backslash\Z_{\leq 0}$ such that $\lambda+\lambda'\notin \mathbb{Z}_{\leq 0}$. The reason for this condition, as will be shown below, is to have the following decomposition:
\[V_\lambda\otimes V_{\lambda'}\cong \bigoplus_{n\geq0} V_{\lambda+\lambda'+2n}\ .\]
We realise the representation $V_\lambda$ on the space $\C[x]$ and $V_{\lambda'}$ on the space $\C[y]$. We denote the basis elements $x^k\otimes y^l$ simply by $x^ky^l$. On this basis, the action of the generators of $U_q(sl_2)$ is:
\begin{align}
\Delta(K)~x^ky^l&=q^{\lambda+\lambda'+2k+2l}x^ky^l\,,\label{actKxy}\\
\Delta(E)~x^ky^l&=x^{k+1}y^l+q^{\lambda+2k}x^ky^{l+1}\,,\label{actExy}\\
\Delta(F)~x^ky^l&= -q^{-\lambda'-2l}[\lambda+k-1][k] x^{k-1}y^l -[\lambda'+l-1][l] x^ky^{l-1}\ .\label{actFxy}
\end{align}
Note that we do not need for now to specify if $x,y$ are commuting variables or not. In fact, we will see in a moment that it will be useful to impose a $q$-commutation relation between $x$ and $y$, thereby identifying the space $\C[x]\otimes \C[y]$ with a quantum plane.

We start by explicitly finding the lowest weight vectors in the tensor product.

\begin{prop}\label{prop_lowestvectors}
For any $n\geq 0$, the element:
\begin{align}
P_n^{\lambda,\lambda'}(x,y) = [\lambda]_n[\lambda']_n\sum_{k=0}^n \qbinom{n}{k}(-1)^k q^{k(\lambda'+2n-k-1)}\frac{x^ky^{n-k}}{[\lambda]_{k}[\lambda']_{n-k} },
\end{align}
is a non-zero lowest weight vector of weight $\lambda+\lambda'+2n$, i.e. $\Delta(F) P_n^{\lambda,\lambda'}(x,y)=0$ and $\Delta(K) P_n^{\lambda,\lambda'}(x,y)=q^{\lambda+\lambda'+2n}P_n^{\lambda,\lambda'}(x,y)$.
\end{prop}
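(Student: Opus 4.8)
The statement asserts that the explicit vector $P_n^{\lambda,\lambda'}(x,y)$ has weight $\lambda+\lambda'+2n$ and is annihilated by $\Delta(F)$. The weight claim is immediate: every monomial $x^ky^{n-k}$ in the sum has $k+(n-k)=n$, so by \eqref{actKxy} it has weight $q^{\lambda+\lambda'+2n}$, and this holds term-by-term. The real content is $\Delta(F)P_n^{\lambda,\lambda'}=0$, together with the assertion that $P_n^{\lambda,\lambda'}\neq 0$. The plan is to write $P_n^{\lambda,\lambda'}(x,y)=\sum_{k=0}^n c_k\, x^ky^{n-k}$ with
\[
c_k=[\lambda]_n[\lambda']_n\,\qbinom{n}{k}\,\frac{(-1)^k q^{k(\lambda'+2n-k-1)}}{[\lambda]_k[\lambda']_{n-k}}\,,
\]
apply \eqref{actFxy} monomial by monomial, and collect the coefficient of each basis element $x^{k}y^{n-1-k}$ in $\Delta(F)P_n^{\lambda,\lambda'}$.

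\medskip
\textbf{Key steps.} First I would compute, from \eqref{actFxy}, that
\[
\Delta(F)\bigl(x^ky^{n-k}\bigr)=-q^{-\lambda'-2(n-k)}[\lambda+k-1][k]\,x^{k-1}y^{n-k}-[\lambda'+n-k-1][n-k]\,x^ky^{n-k-1}\,.
\]
Hence the coefficient of $x^{k}y^{n-1-k}$ in $\Delta(F)P_n^{\lambda,\lambda'}$ (for $0\le k\le n-1$) receives two contributions: one from the monomial $x^{k+1}y^{n-1-k}$ via the first term above (with $k\to k+1$), and one from $x^{k}y^{n-k}$ via the second term. Vanishing of $\Delta(F)P_n^{\lambda,\lambda'}$ is therefore equivalent to the family of recursions
\[
q^{-\lambda'-2(n-1-k)}[\lambda+k][k+1]\,c_{k+1}=-[\lambda'+n-1-k][n-k]\,c_k\,,\qquad 0\le k\le n-1\,.
\]
The second part of the argument is to verify that the stated $c_k$ satisfy this recursion. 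Substituting the closed form, the $q$-binomial ratio $\qbinom{n}{k+1}/\qbinom{n}{k}=[n-k]/[k+1]$, the Pochhammer ratios $[\lambda]_{k+1}/[\lambda]_k=[\lambda+k]$ and $[\lambda']_{n-k}/[\lambda']_{n-1-k}=[\lambda'+n-1-k]$, and the sign/power bookkeeping $(-1)^{k+1}/(-1)^k=-1$ together with $q^{(k+1)(\lambda'+2n-k-2)-k(\lambda'+2n-k-1)}=q^{\lambda'+2n-2k-2}$, all the $q$-number factors cancel against those in the recursion and one is left with an identity $q^{\lambda'+2n-2k-2}\cdot q^{-\lambda'-2(n-1-k)}=1$, which holds. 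This is a routine but slightly delicate exponent computation; I expect the main obstacle to be keeping the four pieces of the coefficient (binomial, two Pochhammers, the $q$-power) correctly aligned when shifting $k\to k+1$, and making sure the boundary monomials $x^{-1}y^{n}$ and $x^{n}y^{-1}$ (which do not occur) are handled consistently — i.e. that the recursion really does capture all coefficients and there is no leftover term.

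\medskip
\textbf{Nonvanishing.} Finally, $P_n^{\lambda,\lambda'}\neq 0$: the leading coefficient, say $c_0=[\lambda']_n$ (the coefficient of $y^n$), is nonzero because $\lambda'\notin\Z$ forces $[\lambda'+s]\neq 0$ for all $s\ge 0$, as $[\,\cdot\,]$ vanishes only at integer arguments (here $q$ is not a root of unity). Likewise $c_n=[\lambda]_n\cdot(-1)^n q^{n(\lambda'+n-1)}\neq 0$. Hence $P_n^{\lambda,\lambda'}$ is a genuine nonzero lowest-weight vector. One could alternatively note that the recursion shows all $c_k$ are nonzero under the hypothesis $\lambda,\lambda'\notin\Z-\{0,\dots\}$, but a single nonzero coefficient already suffices.
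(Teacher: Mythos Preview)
Your approach is essentially the same as the paper's: apply $\Delta(F)$ to each monomial via \eqref{actFxy}, collect the coefficient of $x^ky^{n-1-k}$, and verify the resulting two-term recursion for the $c_k$. The paper organises the computation as two sums that cancel, you phrase it as a ratio $c_{k+1}/c_k$; the algebra is identical and your exponent check is correct.

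There is one small but genuine slip in the nonvanishing paragraph. First, you have $c_0$ and $c_n$ swapped: with the given normalisation, $c_0=[\lambda]_n$ (coefficient of $y^n$) and $c_n=(-1)^nq^{n(\lambda'+n-1)}[\lambda']_n$ (coefficient of $x^n$). Second, and more importantly, you invoke ``$\lambda'\notin\Z$'' and ``$\lambda\notin\Z$'' to conclude $c_0\neq0$ and $c_n\neq0$, but the standing hypothesis in this section is only $\lambda+\lambda'\notin\Z$, which does not rule out one of them being an integer (so one of $[\lambda]_n$, $[\lambda']_n$ may well vanish). The fix is exactly what the paper does: since $\lambda+\lambda'\notin\Z$, they cannot \emph{both} be integers, hence at least one of $[\lambda]_n$, $[\lambda']_n$ is nonzero, and so $P_n^{\lambda,\lambda'}\neq0$.
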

\begin{proof}
To see that $P^{\lambda,\lambda'}_n(x,y)$ is non-zero, we look at the coefficients in front of $x^n$ and $y^n$. We find respectively, up to signs and powers of $q$, $[\lambda']_n$ and $[\lambda]_n$. The conditions $\lambda,\lambda'\notin \Z_{\leq 0}$ ensure that these coefficients are not zero.

The action of $\Delta(K)$ is found immediately, so it only remains to check that the vector in (\ref{prop_lowestvectors}) is annihilated by $\Delta(F)$. We set $c_k=\frac{[\lambda]_n[\lambda']_n}{[\lambda]_k[\lambda']_{n-k}}$ and $\alpha_k=k(\lambda'+2n-k-1)$.
\begin{align*}
-\Delta(F)P_n^{\lambda,\lambda'}(x,y)&=\sum_{k=0}^n(-1)^k \qbinom{n}{k}c_k~ q^{\alpha_k}([\lambda'+n-k-1][n-k] x^ky^{n-k-1}\\
&\qquad \qquad \qquad \qquad \qquad \quad +q^{-\lambda'-2(n-k)}[\lambda+k-1][k] x^{k-1}y^{n-k} )\\
&=\sum_{k=0}^{n-1}(-1)^k \qbinom{n}{k} c_k~ q^{\alpha_k}[\lambda'+n-k-1] [n-k] x^ky^{n-k-1}\\
&+ \sum_{k=0}^{n-1}(-1)^{k+1} \qbinom{n}{k+1} c_{k+1}~ q^{\alpha_{k+1}-\lambda'-2(n-k-1)}[\lambda+k][k+1]x^ky^{n-k-1} .
\end{align*}
We have:
\begin{equation*}
\qbinom{n}{k}c_k[\lambda'+n-k-1][n-k]= \qbinom{n}{k+1}c_{k+1}[\lambda+k][k+1].
\end{equation*}
Thus
\begin{multline*}
-\Delta(F)P_n^{\lambda,\lambda'}(x,y)=
\sum_{k=0}^{n-1}(-1)^k\qbinom{n}{k}c_k[\lambda'+n-k-1][n-k] x^ky^{n-k-1} (q^{\alpha_k}-q^{\alpha_{k+1}-\lambda'-2(n-k-1)}).
\end{multline*}
Finally, we just check that the two powers of $q$ are equals to get the result. 
\end{proof}

\paragraph{The quantum plane $\C_q[x,y]$.} Now that we have lowest weight vectors, we can construct subrepresentations by repeated applications of $\Delta(E)$. Note that in a Verma module realised on $\C[z]$, the action of $E$ is by multiplication by $z$. We are going now to invoke a $q$-commutation relation between $x$ and $y$ in order to have the action of $\Delta(E)$ also given by multiplication.

From now on, we are going to identify the tensor product $\C[x]\otimes \C[y]$ with the quantum plane $\C_q[x,y]$,  defined as the space of polynomials in $x,y$ such that 
\begin{equation}\label{quantumplane}
yx=q^2xy\ .
\end{equation}
In other words, we put an algebra structure on $\C[x]\otimes \C[y]$ by asking that $yx=q^2xy$. We collect below the first advantage of having this $q$-commutation relation between $x$ and $y$.
\begin{prop}\label{propdeltaEanddecomposition}
The action of $\Delta(E)$ on $\C_q[x,y]$ is the left multiplication by $(x+q^{\lambda}y)$:
\begin{equation}
\Delta(E)~x^ky^l=(x+q^{\lambda}y)x^{k}y^l\,,\label{actExymult}
\end{equation}
and the decomposition 
\begin{equation}\C_q[x,y]\simeq \bigoplus_{n\geq0} \C[x+q^\lambda y] P_n^{\lambda,\lambda'}(x,y)\label{decxy} \end{equation}
gives the decomposition of $V_\lambda\otimes V_{\lambda'}$ into the direct sum of subrepresentations $V_{\lambda+\lambda'+2n}$.

\end{prop}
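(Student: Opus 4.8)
The plan is to verify the two claims in Proposition \ref{propdeltaEanddecomposition} in turn, the first by a direct computation using the $q$-commutation relation \eqref{quantumplane}, and the second by combining the first claim with Proposition \ref{prop_lowestvectors} and a dimension/weight count. For the formula \eqref{actExymult}, I would start from the action \eqref{actExy} of $\Delta(E)$ on the basis, namely $\Delta(E)\,x^ky^l=x^{k+1}y^l+q^{\lambda+2k}x^ky^{l+1}$, and simply observe that in $\C_q[x,y]$ the relation $yx=q^2xy$ gives $y^lx = q^{2l}xy^l$, hence $q^{\lambda}y\cdot x^ky^l = q^{\lambda}q^{2k}x^k y^{l+1} = q^{\lambda+2k}x^ky^{l+1}$. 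Therefore $x^{k+1}y^l+q^{\lambda+2k}x^ky^{l+1}=(x+q^{\lambda}y)x^ky^l$, which is exactly \eqref{actExymult}. This is the routine part and poses no difficulty; the only thing to be careful about is the ordering convention (left multiplication by $x+q^{\lambda}y$, with the monomials written in the order $x$'s before $y$'s).

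For the decomposition \eqref{decxy}, I would argue as follows. First, the span decomposition $\C_q[x,y]=\bigoplus_{n\ge 0}\bigoplus_{k\ge 0}\C\,(x+q^{\lambda}y)^k P_n^{\lambda,\lambda'}(x,y)$ is a statement purely about the quantum plane: since $P_n^{\lambda,\lambda'}$ has degree exactly $n$ and nonzero leading behaviour (as established in the proof of Proposition \ref{prop_lowestvectors}, at least one of the coefficients of $x^n$, $y^n$ is nonzero), the family $\{(x+q^{\lambda}y)^kP_n^{\lambda,\lambda'}\}_{k,n}$ contains, for each total degree $m$, exactly $m+1$ linearly independent homogeneous elements (one for each pair $k+n=m$), matching $\dim \C_q[x,y]_m = m+1$. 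One should check the linear independence within a fixed degree $m$: this follows because the elements $(x+q^\lambda y)^k P_n^{\lambda,\lambda'}$ with $k+n=m$ are "triangular" with respect to $n$, e.g. by looking at the coefficient of $x^m$ (or $y^m$, whichever is nonzero) which is a nonzero multiple of a leading coefficient of $P_n$. Then each summand $M_n:=\C[x+q^{\lambda}y]\,P_n^{\lambda,\lambda'}$ is a subrepresentation: it is stable under $\Delta(K)$ because $(x+q^{\lambda}y)^kP_n^{\lambda,\lambda'}$ is a weight vector of weight $\lambda+\lambda'+2n+2k$ (using \eqref{actKxy} and homogeneity), it is stable under $\Delta(E)$ by \eqref{actExymult} (left multiplication by $x+q^{\lambda}y$ preserves $M_n$), and it is stable under $\Delta(F)$ because $\Delta(F)$ lowers degree by one and — using the commutation relations of $U_q(sl_2)$ together with $\Delta(F)P_n^{\lambda,\lambda'}=0$ from Proposition \ref{prop_lowestvectors} — one computes that $\Delta(F)(x+q^\lambda y)^kP_n^{\lambda,\lambda'}$ is again a multiple of $(x+q^\lambda y)^{k-1}P_n^{\lambda,\lambda'}$.

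Finally, to identify $M_n$ with $V_{\lambda+\lambda'+2n}$, I would exhibit the explicit isomorphism: send $z^k\in\C[z]$ (the underlying space of $V_{\lambda+\lambda'+2n}$) to $(x+q^{\lambda}y)^kP_n^{\lambda,\lambda'}(x,y)\in M_n$. By the above this is a vector-space isomorphism intertwining $\Delta(K)$ (both sides have weight $\lambda+\lambda'+2n+2k$ on the $k$-th basis vector, matching \eqref{repbasisformula}) and $\Delta(E)$ (both act by the appropriate "multiplication/shift up"). For $\Delta(F)$, rather than recompute the coefficients by hand, it suffices to invoke that $M_n$ is a lowest-weight module generated by a lowest-weight vector of weight $\lambda+\lambda'+2n$ on which the Casimir acts by the scalar $q^{\lambda+\lambda'+2n-1}+q^{-(\lambda+\lambda'+2n-1)}$ (same value as on $V_{\lambda+\lambda'+2n}$, as noted after \eqref{repbasisformula}); since $\lambda+\lambda'+2n\notin\mathbb Z$ the Verma module $V_{\lambda+\lambda'+2n}$ is irreducible, so any nonzero quotient or sub of it arising this way is isomorphic to it, and the map above is forced to be the intertwiner. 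I expect the main (though still modest) obstacle to be the bookkeeping in showing $\Delta(F)$-stability of $M_n$ and that the isomorphism matches the $F$-action precisely — i.e. pinning down that $\Delta(F)(x+q^\lambda y)^k P_n^{\lambda,\lambda'}$ equals $-[\lambda+\lambda'+2n+k-1][k](x+q^\lambda y)^{k-1}P_n^{\lambda,\lambda'}$; the cleanest route is the Casimir/irreducibility argument, which sidesteps this computation entirely.
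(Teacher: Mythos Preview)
Your proof is essentially correct and follows the same overall strategy as the paper, with one organizational difference and one weak point worth flagging.

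For part one you do exactly what the paper does.

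For the decomposition, the paper argues in the reverse order: it first observes that each subspace $\C[x+q^\lambda y]P_n^{\lambda,\lambda'}$ is automatically a submodule isomorphic to $V_{\lambda+\lambda'+2n}$ (since $P_n^{\lambda,\lambda'}$ is a nonzero lowest-weight vector and $\Delta(E)$ acts by multiplication), and then uses the Casimir to get directness: the eigenvalues $q^{\lambda+\lambda'+2n-1}+q^{-(\lambda+\lambda'+2n-1)}$ are pairwise distinct because $\lambda+\lambda'\notin\mathbb{Z}$ and $q$ is not a root of unity, so the subspaces intersect trivially. The spanning in each degree is then a dimension count. This is cleaner than your route because it avoids proving $\Delta(F)$-stability by hand and gives linear independence for free.

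Your ``triangularity'' argument for linear independence is the weak point. Looking at the coefficient of $x^m$ (resp.\ $y^m$) in $(x+q^\lambda y)^{m-n}P_n^{\lambda,\lambda'}$ gives something proportional to $[\lambda']_n$ (resp.\ $[\lambda]_n$); under the hypothesis $\lambda+\lambda'\notin\mathbb{Z}$ one of these families is nonzero for all $n$, but then \emph{all} the vectors have that coefficient nonzero, which is not a triangular pattern. A triangularity does exist, but it is with respect to powers of $\Delta(F)$ (since $\Delta(F)^{j}$ annihilates $(x+q^\lambda y)^{k}P_n^{\lambda,\lambda'}$ exactly when $j>k$, the relevant products of $q$-integers being nonzero because $\lambda+\lambda'+2n\notin\mathbb{Z}$), not with respect to monomial coefficients as you wrote. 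Since you later invoke the Casimir anyway, the simplest fix is to use distinct Casimir eigenvalues for linear independence from the start, exactly as the paper does, and drop the combinatorial attempt.
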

To be perfectly clear, in order to see an element such as
\[(x+q^{\lambda}y)^kP_n^{\lambda,\lambda'}(x,y)\ \]
as a vector in $\C[x]\otimes \C[y]$, we need to use $yx=q^2xy$ in order to write it in the basis of ordered monomials $x^ky^l$, which is then identified with the standard basis $x^k\otimes y^l$ of $\C[x]\otimes \C[y]$.
\begin{proof}[Proof of Proposition \ref{propdeltaEanddecomposition}] Using the relation $yx=q^2xy$, we have:
\[(x+q^{\lambda}y)x^{k}y^l=x^{k+1}y^l+q^{\lambda+2k}x^ky^{l+1}\ .\]
This reproduces indeed the correct action of $\Delta(E)$ in (\ref{actExy}). From the fact that $P_n^{\lambda,\lambda'}(x,y)$ is a non-zero lowest weight vector of weight $\lambda+\lambda'+2n$ and the form of the action of $\Delta(E)$, we have that the vectors:
\[(x+q^{\lambda}y)^kP_n^{\lambda,\lambda'}(x,y)\,,\ \ \ k\geq 0\ ,\]
form a submodule isomorphic to $V_{\lambda+\lambda'+2n}$. The condition $\lambda+\lambda'\notin \mathbb{Z}_{\leq 0}$ (and $q$ not a root of unity) ensures that the Casimir operator $\Delta(C)$ takes different values on these different subspaces, that is
\[q^{\lambda+\lambda'+n-1}+q^{-(\lambda+\lambda'+n-1)}\neq q^{\lambda+\lambda'+n'-1}+q^{-(\lambda+\lambda'+n'-1)}\ \ \ \ \ \text{if $n\neq n'$.}\]
Therefore these subspaces are linearly independent. Moreover the direct sum exhausts the whole space $\C_q[x,y]$. This can be seen by noting that there is a non-zero term of degree $N$ in $\C[x+q^\lambda y] P_n^{\lambda,\lambda'}(x,y)$ for each $n=0,\dots,N$, which gives the correct dimension $N+1$ for the homogeneous component of degree $N$. This ends the proof of the decomposition (\ref{decxy}).
\end{proof}
\begin{rema}
The necessity of the $q$-commutation relation between $x$ and $y$ should be clear. If the action of $\Delta(E)$ is to be by multiplication by a polynomial, this polynomial has to be the result of $\Delta(E)$ on $1$. This is $x+q^{\lambda} y$. Then the only way to make the action of $\Delta(E)$ coincide with left multiplication by $x+q^{\lambda} y$ is to have $yx=q^2xy$. 

Actually, it is remarkable that such a simple way is possible and of course it is closely related with the fact that $\Delta(E)$ is not equal to $E\otimes 1+1\otimes E$ but is not too far from it. Another way to see it is as follows. 

Due to the PBW basis of $U_q(sl_2)$, the (lowest weight) Verma module $V_{\lambda}$ is naturally identified as a vector space with $\mathbb{C}[E]$. Moreover, this identification is compatible with the action of $E$.

In a similar way, the tensor product $V_{\lambda}\otimes V_{\lambda'}$ of two Verma modules may be identified with $\mathbb{C}[E\otimes 1,1\otimes E]$ as a vector space. For $q=1$, the identification is still compatible with the action of $\Delta(E)$ since $\Delta(E)=E\otimes 1+1\otimes E$. However, this is not the case anymore for $U_q(sl_2)$. Indeed, the space $\mathbb{C}[E\otimes 1,1\otimes E]$ is not stable by multiplication by $\Delta(E)$ since $\Delta(E)=E\otimes 1 + K\otimes E$. 

Thus, the natural identification to make here is
$$V_{\lambda}\otimes V_{\lambda'}\cong \mathbb{C}[E\otimes 1,K\otimes E]\ .$$
The space of polynomials in $E\otimes 1$ and $K\otimes E$ is now stable by multiplication by $\Delta(E)$. From this the $q$-commutation relation appears naturally, since we have:
$$(K\otimes E)(E\otimes 1)=q^{2}(E\otimes 1)(K\otimes E)\ .$$
Note that to identify $V_{\lambda}\otimes V_{\lambda'}\cong\mathbb{C}[x]\otimes \mathbb{C}[y]$ with $\mathbb{C}[E\otimes 1,K\otimes E]$ respecting the action of $\Delta(E)$, we send $x$ to $E\otimes 1$ and $y$ to $q^{-\lambda}K\otimes E$, see Formula (\ref{actExy}).

As a conclusion, the underlying reason for the $q$-commutation relation between $x$ and $y$ is the $q$-commutation relation between the two terms in $\Delta(E)$.
\end{rema}

We collect for later use a reformulation in terms of intertwining operator of the previous results. In the language of \cite{KobaPevznerIII}, the linear map below is a holographic operator.
\begin{coro}\label{coroPsixy}
The linear map $\Psi_n^{\lambda,\lambda'}$ defined by:
\begin{equation}
\Psi_n^{\lambda,\lambda'}\ :\ \C[z]\to \C_q[x,y]\,,\ \ \ \ \ z^k\mapsto (x+q^\lambda y)^kP_n^{\lambda,\lambda'}(x,y)
\end{equation}
is, up to a scalar, the unique intertwining operator between $V_{\lambda+\lambda'+2n}$ and $V_\lambda\otimes V_{\lambda'}$. 
\end{coro}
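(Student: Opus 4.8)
The plan is to verify the three assertions packed into Corollary \ref{coroPsixy}: that $\Psi_n^{\lambda,\lambda'}$ is well-defined, that it is an intertwining operator, and that such an operator is unique up to scalar. The first two are essentially a repackaging of Proposition \ref{propdeltaEanddecomposition}, so the real content is the uniqueness.

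First I would note that $\Psi_n^{\lambda,\lambda'}$ is well-defined as a linear map $\C[z]\to\C_q[x,y]$ simply because $\{z^k\}_{k\geq0}$ is a basis of $\C[z]$, so specifying the image of each $z^k$ suffices. Next, to see it is an intertwining operator, I would check compatibility with the three generators. For $K$: on $V_{\lambda+\lambda'+2n}$ we have $Kz^k=q^{\lambda+\lambda'+2n+2k}z^k$ by \eqref{repbasisformula}, while $(x+q^\lambda y)^kP_n^{\lambda,\lambda'}(x,y)$ has weight $\lambda+\lambda'+2n+2k$ under $\Delta(K)$ since $P_n^{\lambda,\lambda'}$ has weight $\lambda+\lambda'+2n$ (Proposition \ref{prop_lowestvectors}) and each factor $(x+q^\lambda y)$ raises the weight by $2$ — this follows from \eqref{actKxy} applied to monomials, or simply from the fact that left multiplication by $\Delta(E)$ raises weight by $2$. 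For $E$: by \eqref{repbasisformula}, $Ez^k=z^{k+1}$, and Proposition \ref{propdeltaEanddecomposition} tells us that $\Delta(E)$ acts on $\C_q[x,y]$ as left multiplication by $(x+q^\lambda y)$, so $\Delta(E)\Psi_n^{\lambda,\lambda'}(z^k)=(x+q^\lambda y)^{k+1}P_n^{\lambda,\lambda'}=\Psi_n^{\lambda,\lambda'}(z^{k+1})$. Since $K$ and $E$ generate a subalgebra on which $\Psi_n^{\lambda,\lambda'}$ already intertwines, and since on a lowest-weight Verma module the relations force the $F$-action to be determined by the requirement $[E,F]=\frac{K-K^{-1}}{q-q^{-1}}$ together with $F$ annihilating the lowest-weight vector, compatibility with $F$ is automatic; alternatively one invokes directly that the image $\bigoplus_k\C(x+q^\lambda y)^kP_n^{\lambda,\lambda'}$ is the submodule isomorphic to $V_{\lambda+\lambda'+2n}$ identified in \eqref{decxy}, and that $\Psi_n^{\lambda,\lambda'}$ is precisely this isomorphism written out on the standard basis.

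For uniqueness, I would argue as follows. Any intertwining operator $\Phi:V_{\lambda+\lambda'+2n}\to V_\lambda\otimes V_{\lambda'}$ must send the lowest-weight vector $z^0=1$ of $V_{\lambda+\lambda'+2n}$ to a vector of weight $\lambda+\lambda'+2n$ annihilated by $\Delta(F)$. By Proposition \ref{prop_lowestvectors} and the decomposition \eqref{decxy}, the space of $\Delta(F)$-annihilated vectors of weight $\lambda+\lambda'+2n$ in $\C_q[x,y]$ is one-dimensional, spanned by $P_n^{\lambda,\lambda'}(x,y)$ (indeed the decomposition shows each weight space splits across the summands, and only the summand $\C[x+q^\lambda y]P_n^{\lambda,\lambda'}$ contributes a lowest-weight vector at that weight, the others being too low or of the wrong Casimir value). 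Hence $\Phi(1)=c\,P_n^{\lambda,\lambda'}(x,y)$ for some scalar $c$, and then $\Phi(z^k)=\Phi(E^k\cdot 1)=\Delta(E)^k\Phi(1)=c\,(x+q^\lambda y)^kP_n^{\lambda,\lambda'}(x,y)=c\,\Psi_n^{\lambda,\lambda'}(z^k)$, using that $z^k=E^k z^0$ in $V_{\lambda+\lambda'+2n}$ by \eqref{repbasisformula} and that $\Delta(E)$ acts by multiplication by $(x+q^\lambda y)$. Thus $\Phi=c\,\Psi_n^{\lambda,\lambda'}$.

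The main obstacle — really the only non-bookkeeping point — is the claim that the $\Delta(F)$-annihilated weight space at weight $\lambda+\lambda'+2n$ is exactly one-dimensional. This needs the condition $\lambda+\lambda'\notin\Z$ (and $q$ not a root of unity): it guarantees, via the distinct Casimir eigenvalues argument already used in the proof of Proposition \ref{propdeltaEanddecomposition}, that the decomposition \eqref{decxy} is a genuine multiplicity-free decomposition into irreducible Verma modules, so that the only lowest-weight vectors are scalar multiples of the $P_m^{\lambda,\lambda'}$, and among these only $P_n^{\lambda,\lambda'}$ sits at weight $\lambda+\lambda'+2n$. Everything else is a direct translation of \eqref{repbasisformula} and Propositions \ref{prop_lowestvectors} and \ref{propdeltaEanddecomposition}.
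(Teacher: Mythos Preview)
Your proof is correct and follows exactly the line the paper intends: the corollary is stated without proof precisely because it is a direct repackaging of Proposition~\ref{propdeltaEanddecomposition}, and you have spelled out that repackaging---$\Psi_n^{\lambda,\lambda'}$ is the isomorphism onto the $n$-th summand in \eqref{decxy}, with uniqueness coming from the one-dimensionality of the relevant lowest-weight space. One very minor remark: the distinct Casimir eigenvalues argument from Proposition~\ref{propdeltaEanddecomposition} gives directness of the decomposition, while the absence of extra singular vectors inside each summand (hence irreducibility) uses $\lambda+\lambda'\notin\Z$ via the vanishing of $[\lambda+\lambda'+n+m-1]$ in \eqref{repbasisformula}; you invoke both ingredients, but they are logically separate steps rather than a single ``Casimir argument''.
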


\section{Little $q$-Jacobi polynomials and decomposition of tensor products}\label{sec3}

In this section, we introduce a linear isomorphism transforming the quantum plane $\C_q[x,y]$ into a more space of polynomials in two variables. As a result, we show how the little $q$-Jacobi polynomials appear in the lowest weight vectors.

\subsection{Little $q$-Jacobi polynomials}

Following \cite{KLS10}, the little $q$-Jacobi polynomials are defined, using the ${}_2\phi_1$ basic hypergeometric function \eqref{eq:HypergeoDef}, by
\begin{align*}
j_n(X;a,b;q)&={}_2\phi_1\left(\begin{array}{c} q^{-2n},\,abq^{2(n+1)}\\ a q^2\end{array};q,q^2X\right)
\end{align*}
We use parameters $a=q^{2\alpha}$ and $b=q^{2\beta}$ with $\alpha, \beta \notin \C\backslash \Z_{\leq 0}$, and this becomes in our notations
\begin{equation}\label{def_Jn}
j_n^{\alpha,\beta}(X)=j_n(X;q^{2\alpha},q^{2\beta};q) =\sum_{k=0}^n \frac{[-n]_k[n+\alpha+\beta+1]_k}{[\alpha+1]_k}q^{k(\beta+1)} \frac{X^k}{[k]!}.
\end{equation}
Notice that the range for the parameter $\beta$ is not necessary at this stage but will be necessary for Proposition \ref{prop_formulasRodrigues}. 
\begin{rema}
Taking the limit $q=1$ in (\ref{def_Jn}), we get the classical Jacobi polynomials with parameters $\alpha,\beta$.
\end{rema}
The little $q$-Jacobi polynomials are eigenvectors of the $q$-difference operators $\Theta_X$ given by
\begin{equation}\label{eq:Def_Theta}
\Theta_X f(X)=\frac{q^{-\alpha-\beta-1}}{(q-q^{-1})^2}\left(q^{2\alpha}(q^{2(\beta+1)}X-1)\frac{f(q^2X)-f(X)}{X}-(X-1)\frac{f(X)-f(q^{-2}X)}{X}\right).
\end{equation}
Namely, we have (see  \cite{KLS10}):
\begin{equation}\label{eq:eigenvector_Theta}\Theta_X j_n^{\alpha,\beta}(X)=[\alpha+\beta+n+1][n]j_n^{\alpha,\beta}(X)\ .
\end{equation}

 We will need the following alternative formula for the little $q$-Jacobi polynomials. As we will see in the proof, it follows from the Rodrigues formula.
\begin{prop}\label{prop_formulasRodrigues}
The little $q$-Jacobi polynomials with parameters $\alpha, \beta \notin \C\backslash \Z_{\leq 0}$ satisfy the following formula:

\begin{equation}
j_n^{\alpha,\beta}(X)=[\beta+1]_n\sum_{k=0}^n \qbinom{n}{k}(-1)^k \frac{q^{k(\beta+\alpha+1+k)}}{[\alpha+1]_k[\beta+1]_{n-k}} \left(\prod_{s=0}^{n-k-1}(1-q^{-2s}X)\right)X^k.
\end{equation}

\end{prop}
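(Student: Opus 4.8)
The plan is to derive the stated formula from the classical Rodrigues-type formula for the little $q$-Jacobi polynomials. Recall that, up to normalisation, the little $q$-Jacobi polynomial $j_n^{\alpha,\beta}(X)$ can be obtained by applying $n$ times a $q$-difference operator to the weight function $w_{\alpha,\beta}(X)$ of the family: schematically,
\[
j_n^{\alpha,\beta}(X)=\frac{c_n}{w_{\alpha,\beta}(X)}\,(D_q)^n\bigl(w_{\alpha+n,\beta+n}(X)\bigr)\,,
\]
for a suitable explicit constant $c_n$ depending on $[\alpha+1]_k$, $[\beta+1]_n$ and a power of $q$, where $w_{\alpha,\beta}(X)$ is (a $q$-shifted power of) $X^{\alpha}$ times a product of the form $\prod_{s\ge 0}(1-q^{2s}X)\big/\prod_{s\ge 0}(1-q^{2(s+\beta)}X)$, so that the finite product $\prod_{s=0}^{m-1}(1-q^{-2s}X)$ in the statement is exactly the polynomial part that survives. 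First I would write down precisely the Rodrigues formula in the normalisation of \cite{KLS10}, translated into the $[\,\cdot\,]$-notation of the paper using $a=q^{2\alpha}$, $b=q^{2\beta}$; this fixes the constant and the weight.

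Next I would expand $(D_q)^n$ applied to the polynomial $\prod_{s=0}^{n-1}(1-q^{-2s}X)$ (times the remaining $X^{\alpha}$-type factor) using the $q$-Leibniz rule. Concretely, $D_q$ acting on a product of linear factors $\prod_s(1-q^{-2s}X)$ lowers the number of factors, and iterating $n$ times and collecting terms produces a sum over $k$ from $0$ to $n$ with a $q$-binomial coefficient $\qbinom{n}{k}$, a sign $(-1)^k$, a power of $q$ of the form $q^{k(\alpha+\beta+1+k)}$ coming from the $q$-shifts in the differentiations, and the leftover factor $\prod_{s=0}^{n-k-1}(1-q^{-2s}X)\,X^k$. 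Matching the combinatorial factors with $[\alpha+1]_k$, $[\beta+1]_{n-k}$ and $[\beta+1]_n$ is then a bookkeeping exercise with $q$-numbers; alternatively, one may verify the identity by expanding both sides in the monomial basis $\{X^j\}$ and checking that the coefficient of $X^j$ agrees with the coefficient in \eqref{def_Jn}, which is a finite $q$-hypergeometric sum that can be evaluated by the $q$-Vandermonde (or $q$-Chu--Vandermonde) summation.

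I expect the main obstacle to be purely computational rather than conceptual: keeping track of the exponents of $q$ through the iterated $q$-derivative. The operator $D_q$ introduces factors $q^{\,(\cdot)}$ each time it acts (since $D_q(X^j)=\frac{1-q^j}{1-q}X^{j-1}$ up to the conventions here, and more generally $D_q$ on a shifted linear factor produces an extra power of $q$), so after $n$ iterations one accumulates a quadratic-in-$k$ exponent, and getting it to land exactly on $q^{k(\beta+\alpha+1+k)}$ requires careful telescoping. The cleanest route to avoid sign/exponent errors is probably the second approach above: compute the coefficient of $X^j$ on the right-hand side — it is $\sum_{k=0}^{j}$ of an explicit product — and identify it with $\dfrac{[-n]_j[n+\alpha+\beta+1]_j}{[\alpha+1]_j}q^{j(\beta+1)}\dfrac{1}{[j]!}$ from \eqref{def_Jn} via a single application of the $q$-Chu--Vandermonde identity. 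I would present the proof along those lines, stating the Rodrigues formula as the starting point and relegating the $q$-Chu--Vandermonde computation to a short display.
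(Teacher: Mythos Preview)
Your primary route---Rodrigues formula followed by the $q$-Leibniz rule---is exactly the paper's proof. One small correction to your description of that step: you do not differentiate the finite product $\prod_{s=0}^{n-1}(1-q^{-2s}X)$ directly. The weight $w(X;\alpha+n,\beta+n)$ is a ratio of two \emph{infinite} $q$-Pochhammer symbols times $X^{\alpha+n}$; the paper splits it as $f_n(X)g_n(X)$ with $g_n(X)=X^{\alpha+n}$ and $f_n(X)=\prod_{i\geq 0}\frac{1-q^{2(i+1)}X}{1-q^{2(\beta+n+i+1)}X}$, observes that $D_{q^{-2}}f_n$ is a scalar multiple of $f_{n-1}$ and $D_{q^{-2}}g_n$ a scalar multiple of $g_{n-1}$, and then applies the $q$-Leibniz formula. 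The finite product $\prod_{s=0}^{n-k-1}(1-q^{-2s}X)$ only appears at the very end, when one evaluates $f_{n-k}$ at the shifted argument $q^{-2(n-k)}X$ required by Leibniz and divides out by $w(X;\alpha,\beta)=f_0(X)g_0(X)$; the infinite tails cancel and the finite product is what survives. Once you make that adjustment, the bookkeeping of $q$-exponents you anticipate is precisely what the paper carries out.

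Your alternative route---expanding both sides in monomials and invoking $q$-Chu--Vandermonde---is not used in the paper and is a genuinely different argument. It trades the analytic input (Rodrigues) for a purely hypergeometric identity; it would work, and has the virtue of being self-contained, but it hides the structural reason the finite product appears, which in the paper's approach is transparently the telescoping of the weight ratio. The paper also records, in a remark after the proof, a second formula obtained by swapping the roles of $f_n$ and $g_n$ in the Leibniz expansion; your $q$-Vandermonde route would not immediately yield that companion identity without a separate calculation.
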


\begin{proof}
According to \cite{KLS10}, the little $q$-Jacobi polynomials satisfy the following Rodrigues formula:
\begin{equation}\label{Rodrigues}
w(X;\alpha,\beta)j_n^{\alpha,\beta}(X)=c_n(\alpha)  D_{q^{-2}}^n w(X; \alpha+n;\beta+n),
\end{equation}
with
\begin{equation}
w(X;\alpha,\beta)=\left(\prod_{i=0}^\infty \frac{1-q^{2(i+1)}X}{1-q^{2(\beta+i+1)}X}\right)X^\alpha\,, \text{ and } c_n(\alpha)=\frac{q^{n(\alpha+\frac{n-1}{2})}}{[\alpha+1]_n}.
\end{equation}

The operator $D_{q^{-2}}$ satisfies the following $q$-Leibniz formula:
\begin{equation*}
D_{q^{-2}}^n(fg)(X)=\sum_{k=0}^n \qbinom{n}{k} q^{-k(n-k)}f^{(k)}(q^{-2(n-k)}X)g^{(n-k)}(X)\ , 
\end{equation*}
where we use here and below the notation $f^{(k)}=D_{q^{-2}}^{k}(f)$ and $g^{(n-k)}=D_{q^{-2}}^{n-k}(g)$.

Introduce $f_n(X)=\prod_{i=0}^\infty \frac{1-q^{2(i+1)}X}{1-q^{2(\beta+n+i+1)}X}$, and $g_n(X)=X^{\alpha+n}$ so that $w(X; \alpha+n;\beta+n)=f_n(X)g_n(X)$.
Note that
\[f_0(q^{-2(n-k)}X)=\left(\prod_{s=0}^{n-k-1}\frac{1-q^{-2s}X}{1-q^{2(\beta-s)}X}\right)f_0(X)\ \ \ \text{and}\ \ \ f_{n-k}(X)=\left(\prod_{s=1}^{n-k}(1-q^{2(\beta+s)}X)\right)f_0(X)\ .\]
Combining the two equalities and simplifying, we get:
\begin{equation}\label{eqfkqkx}f_{n-k}(q^{-2(n-k)}X)=\left(\prod_{s=0}^{n-k-1}(1-q^{-2s}X)\right)f_0(X)\ .\end{equation}
Now a direct calculation shows that:
\[D_{q^{-2}}(f_n)(X)=-q^{\beta+n+1}[\beta+n]f_{n-1}(X)\ \ \ \ \text{and}\ \ \ \ D_{q^{-2}}(g_n)(X)=q^{-\alpha-n+1}[\alpha+n]g_{n-1}(X)\ .\]
An easy induction on $k$ using also (\ref{eqfkqkx}) gives
\begin{align*}
f_n^{(k)}(q^{-2(n-k)}X)&=(-1)^{k} q^{k(\beta+n+1)-\frac{k(k-1)}{2}}\frac{[\beta+1]_n}{[\beta+1]_{n-k}}\left(\prod_{s=0}^{n-k-1}(1-q^{-2s}X)\right)f_0(X),\\
g_n^{(n-k)}(X)&=q^{-\alpha(n-k)-\frac{n(n-1)}{2}+\frac{k(k-1)}{2}} \frac{[\alpha+1]_n}{[\alpha+1]_{k}}X^{k}g_{0}(X).
\end{align*}
Thus, using the $q$-Leibniz formula in the form given above, we get:
\begin{multline*}
D_{q^{-2}}^n(f_ng_n)(X)=f_0(X)g_0(X) \sum_{k=0}^n \qbinom{n}{k}(-1)^{k} \frac{[\alpha+1]_n[\beta+1]_n}{[\alpha+1]_{k}[\beta+1]_{n-k}}\\
\times q^{k(\alpha+\beta+k+1) -\alpha n-\frac{n(n-1)}{2}}\left(\prod_{s=0}^{n-k-1}(1-q^{-2s}X)\right)X^k.
\end{multline*}
Multiplying by $c_n(\alpha)=q^{\alpha n+\frac{n(n-1)}{2}}[\alpha+1]_n^{-1}$ and dividing by $w(X;\alpha,\beta)=f_0(X)g_0(X)$, we use Rodrigues formula (\ref{Rodrigues}) to obtain the desired formula for $j_n^{\alpha,\beta}(X)$.
\end{proof}
\begin{rema}
Exchanging the role of $f_n$ and $g_n$ in the proof, one gets another formula for the little $q$-Jacobi polynomials:
    \begin{equation*}
    j_n^{\alpha,\beta}(X)=[\beta+1]_n\sum_{k=0}^n \qbinom{n}{k}(-1)^k \frac{1}{[\alpha+1]_k[\beta+1]_{n-k}} q^{k(\beta-\alpha-k+1)}\left(\prod_{s=1}^{n-k}(1-q^{2(\beta+s)}X)\right)X^k.
    \end{equation*}
\end{rema}

\subsection{Little $q$-Jacobi polynomials and lowest weight vectors}

Recall from Proposition \ref{prop_lowestvectors} that the following polynomials in two $q$-commuting variables $x,y$  were found to be the lowest weight vectors in the tensor product $V_{\lambda}\otimes V_{\lambda'}$:
\begin{equation}
P_n^{\lambda,\lambda'}(x,y)=[\lambda]_n[\lambda']_n \sum_{k=0}^n \qbinom{n}{k}(-1)^k  q^{k(\lambda'+2n-k-1)}\frac{x^ky^{n-k}}{[\lambda]_k[\lambda']_{n-k}}.
\end{equation}
Our goal now is to make the connection with the little $q$-Jacobi polynomials.
\begin{rema}
When $q=1$, we have the following relations between the Jacobi polynomials and these two-variable polynomials:
\begin{equation}\label{classicalrel_PJ}
P^{\lambda,\lambda'}_n(x,y)=[\lambda]_n(x+y)^nj^{\alpha,\beta}_n\left(\frac{x}{x+y}\right)\ \ \ \ \text{and}\ \ \ \ j^{\alpha,\beta}_n(X)=\frac{1}{[\lambda]_n}P^{\lambda,\lambda'}_n(X,X-1)\ ,
\end{equation}
where $\alpha+1=\lambda$ and $\beta+1=\lambda'$. Another way of expressing this is through the change of variables $t=x+y$ and $X=\frac{x}{x+y}$. This change of variables identifies $\C[x,y]$ and $\C[t,tX]$, and in this case it is an algebra isomorphism. In the $q$-deformed setting, we can only have a linear isomorphism between $\C_q[x,y]$ and $\C[t,tX]$, and this is what we find below.
\end{rema}

\paragraph{The linear isomorphism $\phi$.} In order to generalise Relations (\ref{classicalrel_PJ}) to the $q$-deformed setting, we are going to construct a linear isomorphism between the quantum plane $\C_q[x,y]$ and the subspace $\C[t,tX]$ of polynomials in two commuting variables $t,X$. We define a linear map $\phi: \C[t,tX] \to \C_q[x,y]$ by:
\begin{equation}\label{defphi}\phi\bigl(t^n(tX)^m\bigr)=(x+q^\lambda y)^n x^m=\sum_{k=0}^n\qbinom{n}{k}q^{(k+\lambda+2m)(n-k)}x^{k+m}y^{n-k}\ .
\end{equation}
For the second equality we used the $q$-binomial formula for variables $x,y$ satisfying $yx=q^2xy$:
\begin{equation}\label{eq:qBinomial}
(x+y)^n=\sum_{k=0}^n \qbinom{n}{k}q^{k(n-k)} x^ky^{n-k}\ .
\end{equation}
\begin{lemm}\label{lemma_actionphi}
The map $\phi$ is invertible and its inverse is given by :
\begin{equation}\label{invphi}
\phi^{-1}(x^ky^p)=q^{-p(\lambda+2k)}t^{k+p}\left(\prod_{s=0}^{p-1}(1-q^{-2s}X)\right) X^k\ .
\end{equation}
\end{lemm}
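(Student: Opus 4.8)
The plan is to verify directly that the stated formula for $\phi^{-1}$ composes with $\phi$ to give the identity on a spanning set, which — since $\phi$ is a linear map between the two spaces $\C[t,tX]$ and $\C_q[x,y]$ whose displayed generating sets $\{t^n(tX)^m\}$ and $\{x^ky^p\}$ are in bijection degree by degree — suffices to establish both invertibility and the formula for the inverse. Concretely, I would denote by $\psi$ the candidate inverse, i.e.\ the linear map $\C_q[x,y]\to\C[t,tX]$ defined on the ordered monomial basis by
\[
\psi(x^ky^p)=q^{-p(\lambda+2k)}t^{k+p}\Bigl(\prod_{s=0}^{p-1}(1-q^{-2s}X)\Bigr)X^k\,,
\]
and then check that $\psi\circ\phi=\operatorname{id}$ on each generator $t^n(tX)^m$.

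The key computational step is this: starting from the second equality in (\ref{defphi}),
\[
\phi\bigl(t^n(tX)^m\bigr)=\sum_{k=0}^n\qbinom{n}{k}q^{(k+\lambda+2m)(n-k)}x^{k+m}y^{n-k}\,,
\]
I apply $\psi$ term by term. The monomial $x^{k+m}y^{n-k}$ is sent to $q^{-(n-k)(\lambda+2(k+m))}t^{n+m}\bigl(\prod_{s=0}^{n-k-1}(1-q^{-2s}X)\bigr)X^{k+m}$. The powers of $q$ combine: the exponent $(k+\lambda+2m)(n-k)$ from $\phi$ and $-(n-k)(\lambda+2k+2m)$ from $\psi$ add up to $(n-k)(k+\lambda+2m-\lambda-2k-2m)=-k(n-k)$, so after reindexing (say with $j=n-k$) one is left with $t^{n+m}X^m$ times
\[
\sum_{k=0}^n\qbinom{n}{k}q^{-k(n-k)}\Bigl(\prod_{s=0}^{n-k-1}(1-q^{-2s}X)\Bigr)X^k\,.
\]
The whole lemma then reduces to the polynomial identity $\sum_{k=0}^n\qbinom{n}{k}q^{-k(n-k)}\bigl(\prod_{s=0}^{n-k-1}(1-q^{-2s}X)\bigr)X^k=1$. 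I would prove this by recognising the product $\prod_{s=0}^{n-k-1}(1-q^{-2s}X)$ as a $q$-shifted factorial $(X;q^{-2})_{n-k}$ and invoking the $q$-binomial theorem in the Gauss form (equivalently, this is the statement that the $n$-th $q$-difference $D_{q^{-2}}^n$ of a constant vanishes, or the classical identity that a "$q$-Newton forward-difference" expansion of the constant polynomial $1$ is trivial). Alternatively one can prove it by a short induction on $n$ using $\qbinom{n}{k}=\qbinom{n-1}{k}+q^{\mp\cdots}\qbinom{n-1}{k-1}$ and the telescoping $1-q^{-2(n-1)}X$ factor.

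The main obstacle, and the only genuinely delicate point, is bookkeeping the powers of $q$ consistently: the definition of $\phi$ in (\ref{defphi}) already uses the $q$-binomial formula (\ref{eq:qBinomial}) for the non-commuting variables $x,y$, and one must be careful that the ordered-monomial convention $yx=q^2xy$ is applied in the same order when defining $\psi(x^ky^p)$ and when re-expanding $\phi(t^n(tX)^m)$; a sign error in an exponent of $q$ there would propagate. Once the exponents are checked to collapse to $-k(n-k)$ as above, the remaining identity is a standard $q$-series fact and the conclusion — that $\phi$ is bijective with inverse $\psi$ — is immediate, since $\psi\circ\phi=\operatorname{id}$ on a basis of the finite-dimensional graded pieces forces $\phi\circ\psi=\operatorname{id}$ as well.
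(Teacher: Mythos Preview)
Your argument is correct, but it proceeds differently from the paper's. The paper checks $\phi\circ\psi=\operatorname{id}$ rather than $\psi\circ\phi=\operatorname{id}$: it shows by induction on $p$ that
\[
\phi\Bigl(t^{k+p}\prod_{s=0}^{p-1}(1-q^{-2s}X)\,X^k\Bigr)=q^{p\lambda}y^px^k\,,
\]
using only the structural property $\phi(tQ)=(x+q^{\lambda}y)\phi(Q)$ together with the single relation $yx=q^2xy$. This avoids expanding anything in the $q$-binomial basis and never needs the identity $\sum_{k=0}^n\qbinom{n}{k}q^{-k(n-k)}\bigl(\prod_{s=0}^{n-k-1}(1-q^{-2s}X)\bigr)X^k=1$ that you reduce to. Your route is more computational but perfectly valid: your exponent bookkeeping (the collapse to $-k(n-k)$) is right, and the residual identity is indeed a standard one --- with $R=q^{-2}$ it reads $\sum_k\binom{n}{k}_R(X;R)_{n-k}X^k=1$, which follows by a one-line induction using the Pascal relation $\binom{n}{k}_R=R^{n-k}\binom{n-1}{k-1}_R+\binom{n-1}{k}_R$ (the two resulting sums cancel except for $I_{n-1}(X)$). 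The paper's induction is shorter and makes transparent why the factors $(1-q^{-2s}X)$ appear, namely as the successive corrections needed to strip off $y$'s from $(x+q^\lambda y)$; your approach, by contrast, surfaces an explicit $q$-series identity that is of independent interest.
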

\begin{proof}
We prove by induction on $p$ that
\begin{equation}\label{invphi_proof}
\phi\left(t^{k+p}\left(\prod_{s=0}^{p-1}(1-q^{-2s}X)\right) X^k\right)=q^{p\lambda}y^px^k.
\end{equation}
The case $p=0$ is clear. Moreover, note that $\phi(tQ)=(x+q^{\lambda}y)\phi(Q)$ for any $Q\in\C[t,tX]$. Using this remark and the induction hypothesis, we make the following calculation:
\begin{align*}
&\phi\left(t^{k+1+p}\left(\prod_{s=0}^p (1-q^{-2s}X)\right) X^k\right) \\
&=\phi\left(t^{k+1+p}\left(\prod_{s=0}^{p-1} (1-q^{-2s}X)\right) X^k\right)-q^{-2p}\phi\left(t^{k+1+p}\left(\prod_{s=0}^{p-1} (1-q^{-2s}X)\right) X^{k+1}\right)\\
&=q^{p\lambda}(x+q^\lambda y)y^p x^k-q^{-2p}q^{p\lambda}y^px^{k+1} =q^{(p+1)\lambda}y^{p+1}x^k.
\end{align*}
Formula (\ref{invphi}) follows from (\ref{invphi_proof}) and $yx=q^2xy$.
\end{proof}

We are ready to state the main property of the map $\phi$, providing the link between the little $q$-Jacobi polynomials and the decomposition of the tensor product $V_{\lambda}\otimes V_{\lambda'}$ of $U_q(sl_2)$-representations. Below, through the map $\phi$ (more precisely, its inverse $\phi^{-1}$), the tensor product is identified to a space of polynomials in two variables $t$ and $tX$ as follows:
\[V_{\lambda}\otimes V_{\lambda'}\,\cong\, \C[x]\otimes\C[y]\,\cong\,\C_q[x,y]\,\stackrel{\phi^{-1}}{\cong}\,\C[t,tX]\ .\]
\begin{coro}\label{coro_phi}
The map $\phi$ satisfies:
\[\phi\ :\ [\lambda]_n t^{n+k}j^{\lambda-1,\lambda'-1}_n(X)\ \mapsto\ q^{n\lambda}(x+q^\lambda y)^kP_n^{\lambda,\lambda'}(x,y)\ .\]
The vector space decomposition 
\[\C[t,tX]\simeq \bigoplus_n \C[t] t^nj^{\lambda-1,\lambda'-1}_n(X) \]
gives the decomposition of $V_\lambda\otimes V_{\lambda'}$ into a direct sum of subrepresentations $V_{\lambda+\lambda'+2n}$.
\end{coro}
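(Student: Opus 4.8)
The plan is to verify the first claimed identity by a direct computation, comparing the image under $\phi$ of the little $q$-Jacobi polynomial (in the form given by Proposition \ref{prop_formulasRodrigues}) with the lowest-weight vector $P_n^{\lambda,\lambda'}(x,y)$ from Proposition \ref{prop_lowestvectors}, and then deduce the decomposition statement by transporting the decomposition \eqref{decxy} of Proposition \ref{propdeltaEanddecomposition} through the linear isomorphism $\phi$. Let me sketch each step.

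\textbf{Step 1: the key identity.} First I would start from the Rodrigues-type formula of Proposition \ref{prop_formulasRodrigues}, with parameters $\alpha=\lambda-1$, $\beta=\lambda'-1$:
\[
j_n^{\lambda-1,\lambda'-1}(X)=[\lambda']_n\sum_{k=0}^n \qbinom{n}{k}(-1)^k\frac{q^{k(\lambda'+\lambda-1+k)}}{[\lambda]_k[\lambda']_{n-k}}\prod_{s=0}^{n-k-1}(1-q^{-2s}X)\,X^k\ .
\]
Multiplying by $[\lambda]_n t^{n+j}$ (I use $j$ for the free exponent to avoid clash with $j_n$) and applying $\phi$, I use the elementary fact, already noted in the proof of Lemma \ref{lemma_actionphi}, that $\phi(tQ)=(x+q^\lambda y)\phi(Q)$, so $\phi(t^{n+j}R(X))=(x+q^\lambda y)^{j}\,\phi(t^{n}R(X))$ for any $R\in\C[X]$ of degree $\le n$. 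It remains to compute $\phi\bigl(t^{n}\prod_{s=0}^{n-k-1}(1-q^{-2s}X)X^k\bigr)$. Writing $t^n\cdot\prod_{s=0}^{n-k-1}(1-q^{-2s}X)X^k = t^{k+(n-k)}\prod_{s=0}^{(n-k)-1}(1-q^{-2s}X)X^k$, this is exactly the argument appearing on the left of \eqref{invphi_proof} with $p=n-k$, so by Lemma \ref{lemma_actionphi} it equals $q^{(n-k)\lambda}y^{n-k}x^k = q^{(n-k)\lambda}q^{-2k(n-k)}x^ky^{n-k}$ after reordering via $yx=q^2xy$. Substituting back and collecting the powers of $q$, the image under $\phi$ becomes
\[
[\lambda]_n[\lambda']_n(x+q^\lambda y)^j\sum_{k=0}^n\qbinom{n}{k}(-1)^k\frac{q^{k(\lambda'+\lambda-1+k)+(n-k)\lambda-2k(n-k)}}{[\lambda]_k[\lambda']_{n-k}}x^ky^{n-k}\ .
\]
The final step of Step 1 is the bookkeeping check that the exponent $k(\lambda'+\lambda-1+k)+(n-k)\lambda-2k(n-k)$ equals $n\lambda + k(\lambda'+2n-k-1)$, which is precisely $n\lambda + \alpha(n,k)$ in the notation of the proof of Proposition \ref{prop_lowestvectors}; this identifies the sum with $q^{n\lambda}P_n^{\lambda,\lambda'}(x,y)$ and proves $\phi\bigl([\lambda]_n t^{n+j}j_n^{\lambda-1,\lambda'-1}(X)\bigr)=q^{n\lambda}(x+q^\lambda y)^j P_n^{\lambda,\lambda'}(x,y)$.

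\textbf{Step 2: the decomposition.} Since $\phi$ is a linear isomorphism (Lemma \ref{lemma_actionphi}) and maps the subspace $\C[t]\,t^n j_n^{\lambda-1,\lambda'-1}(X)=\mathrm{span}\{t^{n+j}j_n^{\lambda-1,\lambda'-1}(X)\}_{j\ge0}$ bijectively onto $\C[x+q^\lambda y]\,P_n^{\lambda,\lambda'}(x,y)$ by Step 1 (up to the nonzero scalars $[\lambda]_n$ and $q^{n\lambda}$), the direct sum decomposition $\C[t,tX]=\bigoplus_n \C[t]\,t^n j_n^{\lambda-1,\lambda'-1}(X)$ follows immediately from \eqref{decxy}, and transports the $U_q(sl_2)$-module structure so that each summand is the subrepresentation $V_{\lambda+\lambda'+2n}$. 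One should also remark that $\{t^{n+j}j_n^{\lambda-1,\lambda'-1}(X)\}_{n,j\ge0}$ is a basis of $\C[t,tX]$: since $j_n^{\lambda-1,\lambda'-1}$ has degree exactly $n$ in $X$ (the top coefficient involves $[-n]_n[\,\lambda+\lambda'+n-1\,]_n/[\lambda]_n$, nonzero under $\lambda+\lambda'\notin\Z$), these monomials are triangular with respect to the basis $\{t^{n+j}X^m\}$, hence linearly independent and spanning; this is what makes the direct sum on the left-hand side legitimate independently of Step 1.

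\textbf{Main obstacle.} The only genuinely delicate point is the exponent comparison in Step 1: one must carefully track all powers of $q$ arising from three sources — the $q^{k(\beta+\alpha+1+k)}$ in the Rodrigues formula, the $q^{(n-k)\lambda}$ from Lemma \ref{lemma_actionphi}, and the $q^{-2k(n-k)}$ from reordering $y^{n-k}x^k$ via the quantum plane relation — and check they sum to exactly $n\lambda+\alpha(n,k)$. This is a routine but error-prone calculation; everything else is formal manipulation of the already-established isomorphism $\phi$ and the decomposition \eqref{decxy}.
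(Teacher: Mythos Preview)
Your approach is exactly the paper's: combine the Rodrigues-type formula of Proposition~\ref{prop_formulasRodrigues} with Lemma~\ref{lemma_actionphi} to obtain the identity for $k=0$, use $\phi(t^kQ)=(x+q^\lambda y)^k\phi(Q)$ for the general case, and then transport the decomposition~\eqref{decxy} through the linear isomorphism $\phi$.

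There is one slip in Step~1. From $yx=q^2xy$ one has $y^{n-k}x^k=q^{+2k(n-k)}x^ky^{n-k}$, not $q^{-2k(n-k)}x^ky^{n-k}$; equivalently, applying \eqref{invphi} directly gives $\phi\bigl(t^n\prod_{s=0}^{n-k-1}(1-q^{-2s}X)X^k\bigr)=q^{(n-k)(\lambda+2k)}x^ky^{n-k}$. With the correct sign the exponent is
\[
k(\lambda+\lambda'-1+k)+(n-k)(\lambda+2k)=n\lambda+k(\lambda'+2n-k-1),
\]
which is the identity you need; as written, your exponent $k(\lambda'+\lambda-1+k)+(n-k)\lambda-2k(n-k)$ does \emph{not} equal $n\lambda+k(\lambda'+2n-k-1)$ except when $k=0$ or $k=n$. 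Once this sign is fixed, the argument is complete and matches the paper.
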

\begin{proof}
Using the formula in Proposition \ref{prop_formulasRodrigues} together with Lemma \ref{lemma_actionphi}, it is follows directly that $\phi$ sends $[\lambda]_n t^{n}j^{\lambda-1,\lambda'-1}_n(X)$ to $q^{n\lambda}P_n^{\lambda,\lambda'}(x,y)$. Then we use that, from the definition of $\phi$ we have $\phi(t^kQ)=(x+q^{\lambda}y)^k\phi(Q)$ for any $Q\in\C[t,tX]$.

The decomposition of the representation is the direct translation into the $(t,X)$-picture of Proposition \ref{propdeltaEanddecomposition} giving the decomposition in the $(x,y)$-picture.
\end{proof}

As a consequence, we can also express the intertwining operator (\emph{i.e.} the holographic operator) from Corollary \ref{coroPsixy} in the $(t,X)$-picture.
\begin{coro}\label{coroPsitX}
The map 
\[ \Psi_n^{\lambda,\lambda'}\ :\ \C[z]\to \C[t,tX]\,,\ \ \ \ \ z^k\mapsto t^{k+n}j^{\lambda-1,\lambda'-1}_n(X)\]
is, up to a scalar, the unique intertwining operator between $V_{\lambda+\lambda'+2n}$ and $V_\lambda\otimes V_{\lambda'}$. 
\end{coro}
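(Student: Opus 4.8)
The plan is to obtain this statement as an immediate transport, via the linear isomorphism $\phi$, of the already-established Corollary \ref{coroPsixy}. First I would recall that Corollary \ref{coroPsixy} asserts that $z^k\mapsto (x+q^\lambda y)^kP_n^{\lambda,\lambda'}(x,y)$ is, up to scalar, the unique intertwining operator $V_{\lambda+\lambda'+2n}\to V_\lambda\otimes V_{\lambda'}$ when the target is realised on $\C_q[x,y]$. By Lemma \ref{lemma_actionphi} the map $\phi:\C[t,tX]\to\C_q[x,y]$ is a linear isomorphism, so we may transport the $U_q(sl_2)$-action from $\C_q[x,y]$ to $\C[t,tX]$ through $\phi^{-1}$; with this transported action, $\C[t,tX]$ becomes a model of $V_\lambda\otimes V_{\lambda'}$, and Corollary \ref{coro_phi} records exactly that the resulting decomposition into isotypic pieces is $\bigoplus_n\C[t]\,t^nj_n^{\lambda-1,\lambda'-1}(X)$.

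The key computation is then purely formal: composing the holographic operator of Corollary \ref{coroPsixy} with $\phi^{-1}$ gives the candidate map of the statement. By Corollary \ref{coro_phi}, $\phi\bigl([\lambda]_n t^{n+k}j_n^{\lambda-1,\lambda'-1}(X)\bigr)=q^{n\lambda}(x+q^\lambda y)^kP_n^{\lambda,\lambda'}(x,y)$, hence
\begin{equation*}
\phi^{-1}\bigl((x+q^\lambda y)^kP_n^{\lambda,\lambda'}(x,y)\bigr)=q^{-n\lambda}[\lambda]_n\,t^{k+n}j_n^{\lambda-1,\lambda'-1}(X)\ ,
\end{equation*}
so the composite $\phi^{-1}\circ\Psi_n^{\lambda,\lambda'}$ sends $z^k$ to $t^{k+n}j_n^{\lambda-1,\lambda'-1}(X)$ up to the overall nonzero scalar $q^{-n\lambda}[\lambda]_n$. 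Since $\phi$ is a $U_q(sl_2)$-isomorphism between the two models of $V_\lambda\otimes V_{\lambda'}$ by construction, this composite is still an intertwining operator $V_{\lambda+\lambda'+2n}\to V_\lambda\otimes V_{\lambda'}$, and it remains unique up to scalar because $\phi$ is an isomorphism and uniqueness already holds in the $(x,y)$-picture.

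There is essentially no obstacle here beyond bookkeeping: the only point requiring a word of care is that $\phi$ is only a \emph{linear} isomorphism, not an algebra map, so one must phrase everything in terms of the transported action rather than trying to identify $\C[t,tX]$ with $\C_q[x,y]$ as algebras. But this is harmless for the present statement, since an intertwining operator is a linear map and linearity is all that is used. One may also, if desired, note consistency with the $q=1$ limit: the formula $z^k\mapsto t^{k+n}j_n^{\alpha,\beta}(X)$ then recovers \eqref{introPsi}, since \eqref{classicalrel_PJ} shows $t^nj_n^{\lambda-1,\lambda'-1}(x/t)$ is proportional to $P_n^{\lambda,\lambda'}(x,y)$ with $t=x+y$.
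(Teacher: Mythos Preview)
Your proposal is correct and follows exactly the approach implicit in the paper: the corollary is stated there as an immediate consequence of Corollary~\ref{coroPsixy} and Corollary~\ref{coro_phi}, and you have simply spelled out the transport through $\phi^{-1}$ together with the resulting scalar $q^{-n\lambda}[\lambda]_n$.
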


\subsection{Action of the generators in the new variables}

Now that we have translated, through the map $\phi$ from (\ref{defphi}), the representation of $U_q(sl_2)$ acting on $\C_q[x,y]$ into a representation on $\C[t,tX]$, we can calculate the action of the generators in the new model in $t$ and $X$. The separation of variable mechanism is clearly apparent in the action of $\Delta(K)$ and $\Delta(E)$ below. Indeed the action reproduces on the variable $t$ the original action (\ref{Verma}) of $U_q(sl_2)$ on one-variable polynomials, without touching the variable $X$. There is only an additional term in $\Delta(F)$ involving the $q$-difference operator having the little $q$-Jacobi polynomials as eigenvectors. Therefore, the action below is another way to formulate the decomposition of the tensor product formulated in Corollary \ref{coro_phi}.
\begin{prop}\label{prop_actiontX}
For $P\in \C[t,tX]$ we have:
\begin{align}
\Delta(K)P(t,X)&=q^{\lambda+\lambda'}P(q^2t,X),\\
\Delta(E)P(t,X)&=tP(t,X).\\
\Delta(F)P(t,X)&=-\frac{q^{\lambda+\lambda'-1}(P(q^2t,X)-P(t,X))-q^{-\lambda-\lambda'+1}(P(t,X)-P(q^{-2}t,X))}{(q-q^{-1})^2t} \notag\\
 & \ \ \ \qquad\qquad +\frac{1}{t}\Theta_X P(t,X)\,, \label{eq:DeltaF_en_t,X}
\end{align} 
where $\Theta_X$ is defined in \eqref{eq:Def_Theta} and applied only to the variable $X$. 
\end{prop}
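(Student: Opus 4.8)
The plan is to transport the representation of $U_q(sl_2)$ on $\C_q[x,y]$ through the linear isomorphism $\phi$ of \eqref{defphi}, i.e. to show that each displayed operator on $\C[t,tX]$ equals $\phi^{-1}\circ\Delta(\cdot)\circ\phi$. For $\Delta(K)$ this is immediate: $\phi\bigl(t^n(tX)^m\bigr)=(x+q^\lambda y)^nx^m$ is a sum of monomials $x^ay^b$ with $a+b=n+m$, hence an eigenvector of $\Delta(K)$ of eigenvalue $q^{\lambda+\lambda'+2(n+m)}$ by \eqref{actKxy}; since $t^n(tX)^m=t^{n+m}X^m$, this is exactly $q^{\lambda+\lambda'}$ times the result of the substitution $t\mapsto q^2t$. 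For $\Delta(E)$ I would use that $\phi(tQ)=(x+q^\lambda y)\phi(Q)$ for every $Q\in\C[t,tX]$ (clear from \eqref{defphi}) together with Proposition~\ref{propdeltaEanddecomposition}, which says $\Delta(E)$ on $\C_q[x,y]$ is left multiplication by $x+q^\lambda y$; hence $\phi^{-1}\Delta(E)\phi(P)=tP$, and $tP$ indeed lies in $\C[t,tX]$.

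The content of the proposition is the formula \eqref{eq:DeltaF_en_t,X} for $\Delta(F)$. I would phrase it as the identity $\Delta(F)\circ\phi=\phi\circ\widetilde{F}$ on $\C[t,tX]$, where $\widetilde{F}$ denotes the right-hand side of \eqref{eq:DeltaF_en_t,X}, and check it on the basis $\{t^{n+k}j_n^{\lambda-1,\lambda'-1}(X)\}_{n,k\geq0}$ of $\C[t,tX]$ provided by the decomposition in Corollary~\ref{coro_phi}. On the left-hand side, Corollary~\ref{coro_phi} gives that $\phi\bigl(t^{n+k}j_n^{\lambda-1,\lambda'-1}(X)\bigr)$ equals $\tfrac{q^{n\lambda}}{[\lambda]_n}(x+q^\lambda y)^kP_n^{\lambda,\lambda'}(x,y)$, which sits in the submodule isomorphic to $V_{\lambda+\lambda'+2n}$ in which $\Delta(E)$ is multiplication by $x+q^\lambda y$ (Proposition~\ref{propdeltaEanddecomposition}); therefore, applying \eqref{repbasisformula} with weight $\lambda+\lambda'+2n$,
\[\Delta(F)\bigl((x+q^\lambda y)^kP_n^{\lambda,\lambda'}(x,y)\bigr)=-[\lambda+\lambda'+2n+k-1][k]\,(x+q^\lambda y)^{k-1}P_n^{\lambda,\lambda'}(x,y).\]
On the right-hand side, applying $\widetilde{F}$ to $t^{n+k}j_n^{\lambda-1,\lambda'-1}(X)$: the first term (the $q$-difference operator in $t$) reproduces on the variable $t$ precisely the $F$-action of the Verma module $V_{\lambda+\lambda'}$ via \eqref{Verma}--\eqref{repbasisformula}, yielding the scalar $-[\lambda+\lambda'+n+k-1][n+k]$; the second term $\tfrac1t\Theta_X$ uses the eigenvalue equation \eqref{eq:eigenvector_Theta} with $\alpha=\lambda-1$, $\beta=\lambda'-1$ (so $\alpha+\beta+n+1=\lambda+\lambda'+n-1$), yielding the scalar $[\lambda+\lambda'+n-1][n]$. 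Both sides are thus proportional to $\phi\bigl(t^{n+k-1}j_n^{\lambda-1,\lambda'-1}(X)\bigr)$ with the same $n$-dependent constant $\tfrac{q^{n\lambda}}{[\lambda]_n}$, and the equality reduces to the $q$-number identity
\[[\lambda+\lambda'+n+k-1][n+k]-[\lambda+\lambda'+n-1][n]=[\lambda+\lambda'+2n+k-1][k],\]
which is checked directly by expanding each bracket as $[a]=\tfrac{q^a-q^{-a}}{q-q^{-1}}$.

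There is no single hard step, the argument being essentially bookkeeping for a transport of structure; the point demanding the most care is matching the two eigenvalue contributions inside $\widetilde{F}$ — the $t$-part carries the weight $\lambda+\lambda'$ rather than $\lambda+\lambda'+2n$, whereas the $X$-part carries the little $q$-Jacobi parameters $\alpha=\lambda-1$, $\beta=\lambda'-1$ — and keeping the $n$-dependent scalars from Corollary~\ref{coro_phi} aligned so that they cancel in the comparison. As an alternative one could compute $\phi^{-1}\Delta(F)\phi$ head-on from the explicit inverse \eqref{invphi}, the action \eqref{actFxy}, and \eqref{defphi}, but this would require more laborious manipulation of $q$-binomial sums and of the products $\prod_{s}(1-q^{-2s}X)$, so I would avoid it in favour of the basis check above.
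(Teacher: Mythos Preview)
Your proof is correct and follows essentially the same route as the paper: both verify the formula for $\Delta(F)$ on the basis $\{t^{n+k}j_n^{\lambda-1,\lambda'-1}(X)\}_{n,k\ge0}$, computing the $t$-part via \eqref{repbasisformula} with weight $\lambda+\lambda'$ and the $X$-part via the eigenvalue equation \eqref{eq:eigenvector_Theta}, then reducing to the same $q$-number identity. The only cosmetic difference is that the paper determines $\Delta(F)v_k$ directly from the $U_q(sl_2)$ relations in the $(t,X)$-model (having already established the $\Delta(K)$ and $\Delta(E)$ actions there), whereas you pull back through $\phi$ to $\C_q[x,y]$ and invoke \eqref{repbasisformula} on $(x+q^\lambda y)^kP_n^{\lambda,\lambda'}$; the paper also makes explicit that the combined scalar vanishes at $k=0$, which is what guarantees $\widetilde{F}$ preserves $\C[t,tX]$.
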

\noindent Note that the given action of $\Delta(F)$ is well-defined on $\C[t,tX]$ as will be seen during the proof.
\begin{proof}
The verification for $K$ and $E$ is an immediate calculation using the formula in $(x,y)$ found in (\ref{actKxy})--(\ref{actFxy}) together with the definition of $\phi$ in (\ref{defphi}).

Let $n\geq 0$ and set $v_k=t^{k+n}j^{\lambda-1,\lambda'-1}_n(X)$ for $k\geq 0$. From Corollary \ref{coroPsitX}, we know that $\{v_k\}_{k\geq 0}$ spans the copy of $V_{\lambda+\lambda'+2n}$ in $V_{\lambda}\otimes V_{\lambda'}$. Moreover, we have $\Delta(K)v_k=q^{\lambda+\lambda'+2n+2k}v_k$ and $\Delta(E)v_k=v_{k+1}$. Therefore, the action of $\Delta(F)$ is uniquely fixed by the relations of $U_q(sl_2)$ and, as in (\ref{repbasisformula}), we have
\begin{equation}\label{proofDeltaF}\Delta(F)v_k=-[\lambda+\lambda'+2n+k-1][k]v_{k-1}\ .\end{equation}
To conclude the proof, we must only check that the right hand side of \eqref{eq:DeltaF_en_t,X} reproduces the correct formula on the vectors $v_k$. Using the same formula (\ref{repbasisformula}) in $t$ for the first term, and the eigenvalue equation \eqref{eq:Def_Theta} for $\Theta_X$, we find
\begin{equation}
(-[\lambda+\lambda'+n+k-1][n+k]+[\lambda+\lambda'+n-1][n])v_{k-1}. 
\end{equation}
Note that the parenthesis is $0$ when $k=0$, which shows that the given action of $\Delta(F)$ preserves $\C[t,tX]$ as it should. Moreover, for arbitrary $k\geq0$, it is easy to check that the parenthesis gives the value expected from (\ref{proofDeltaF}).
\end{proof}

For later use, we also collect the action of the Casimir operator in the $(t,X)$-model. Up to normalisation and up to an additive constant, the element $\Delta(C)$ acts only on $X$ (and does not touch the variable $t$) and reproduces the operator $\Theta_X$ having the little $q$-Jacobi polynomials as eigenvectors. This is once again making apparent the mechanism of separation of variables from $(x,y)$ to $(t,X)$, and the fact that the little $q$-Jacobi polynomials in $X$ provide the decomposition of the tensor product.
\begin{coro}\label{coro_actiondeltaC}
In the variables $(t,X)$ the action of the Casimir operator $C$ \eqref{eq:Casimir} is explicitly given by:
\begin{equation}
\Delta(C) =(q-q^{-1})^2\Theta_X +q^{\lambda+\lambda'-1}+q^{-\lambda-\lambda'+1}.
\end{equation} 
\end{coro}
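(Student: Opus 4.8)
The plan is to feed the explicit formulas for $\Delta(K)$, $\Delta(E)$ and $\Delta(F)$ in the $(t,X)$-model from Proposition~\ref{prop_actiontX} into the definition~\eqref{eq:Casimir} of the Casimir. Writing $C=(q-q^{-1})^2FE+qK+q^{-1}K^{-1}$ and using that $\Delta$ is an algebra homomorphism, one has $\Delta(C)=(q-q^{-1})^2\,\Delta(F)\Delta(E)+q\,\Delta(K)+q^{-1}\Delta(K)^{-1}$, so for $P\in\C[t,tX]$ everything reduces to computing $\Delta(F)\big(\Delta(E)P\big)=\Delta(F)\big(tP(t,X)\big)$.

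First I would apply formula~\eqref{eq:DeltaF_en_t,X} to the polynomial $tP(t,X)$. Since $\Theta_X$ only acts on the variable $X$, it commutes with multiplication by $t$, so $\tfrac1t\Theta_X(tP)=\Theta_X P$; this produces the term $(q-q^{-1})^2\Theta_X P$ in $\Delta(C)P$ after multiplying by $(q-q^{-1})^2$. In the $q$-difference part of $\Delta(F)$, every occurrence of $P(q^{\pm2}t,X)$ picks up an extra factor $q^{\pm2}t$, and after dividing by $t$ one is left with an explicit linear combination of $P(q^2t,X)$, $P(t,X)$ and $P(q^{-2}t,X)$ with coefficients that are monomials in $q^{\pm(\lambda+\lambda')}$.

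Then I would add $q\,\Delta(K)P=q^{\lambda+\lambda'+1}P(q^2t,X)$ and $q^{-1}\Delta(K)^{-1}P=q^{-\lambda-\lambda'-1}P(q^{-2}t,X)$. A short check of powers of $q$ shows that the coefficients of $P(q^2t,X)$ and of $P(q^{-2}t,X)$ cancel exactly, while the surviving $P(t,X)$ pieces combine into $(q^{\lambda+\lambda'-1}+q^{-\lambda-\lambda'+1})P$, which is precisely the asserted formula. There is no genuine obstacle here; the only point requiring care is this bookkeeping of signs and powers of $q$ in the cancellation.

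As a cross-check, and in fact an alternative proof, one may observe that by Corollary~\ref{coro_phi} the space $\C[t,tX]$ decomposes as $\bigoplus_n\C[t]\,t^nj_n^{\lambda-1,\lambda'-1}(X)$ with the $n$-th summand isomorphic to $V_{\lambda+\lambda'+2n}$, on which $\Delta(C)$ acts by the scalar $q^{\lambda+\lambda'+2n-1}+q^{-\lambda-\lambda'-2n+1}$; by the eigenvalue equation~\eqref{eq:eigenvector_Theta} the right-hand side acts on the same summand by $(q-q^{-1})^2[\lambda+\lambda'+n-1][n]+q^{\lambda+\lambda'-1}+q^{-\lambda-\lambda'+1}$, and an elementary computation shows these two scalars agree for every $n$, so the two operators coincide on each summand and hence on all of $\C[t,tX]$.
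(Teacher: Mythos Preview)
Your proposal is correct. The paper itself does not spell out a proof of this corollary; it simply states it as a consequence of Proposition~\ref{prop_actiontX} and then remarks that the eigenvalue of $\Delta(C)$ on $t^{k+n}j_n^{\lambda-1,\lambda'-1}(X)$ is $q^{\lambda+\lambda'+2n-1}+q^{-\lambda-\lambda'-2n+1}$, which is exactly your second (``cross-check'') argument. Your primary route, plugging the $(t,X)$-formulas into $\Delta(C)=(q-q^{-1})^2\Delta(F)\Delta(E)+q\Delta(K)+q^{-1}\Delta(K)^{-1}$ and watching the $P(q^{\pm2}t,X)$ terms cancel, is the direct computation the paper implicitly has in mind when calling this a corollary of Proposition~\ref{prop_actiontX}; it is straightforward and your description of the cancellation is accurate.
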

Note that the eigenvalue of $\Delta(C)$ on $t^{k+n}j^{\lambda-1,\lambda'-1}_n(X)$ is $q^{\lambda+\lambda'+2n-1}+q^{-\lambda-\lambda'-2n+1}$ which is the expected eigenvalue on $V_{\lambda+\lambda'+2n}$.

\section{Little $q$-Jacobi polynomials and Clebsch--Gordan coefficients}\label{sec4}

The purpose of this section is to formulate the Clebsch--Gordan decomposition in terms of the little $q$-Jacobi polynomials. As is well known, the $q$-Hahn polynomials appear as Clebsch--Gordan coefficients for $U_q(sl_2)$. The formulas relating the little $q$-Jacobi polynomials and the $q$-Hahn polynomials that will be obtained in this section are
\begin{equation}\label{convolution1}
    (x+q^{\lambda}y)^{N-k}P^{\lambda,\lambda'}_k(x,y)=[\lambda]_kq^{-\lambda k}\sum_{l=0}^N\qbinom{N}{l}q^{(\lambda+l)(N-l)}Q_{k}^{(N)}(q^{-2l})x^ly^{N-l}\,,\ \ \ \ \ \forall k\leq N\,,
\end{equation}
\begin{equation}\label{convolution2}
    j^{\lambda-1,\lambda'-1}_k(X)=\sum_{l=0}^N\qbinom{N}{l}q^{-l(N-l)}Q_{k}^{(N)}(q^{-2l})\left(\prod_{s=0}^{N-l-1}(1-q^{-2s}X)\right)X^l\,,\ \ \ \ \ \forall k\leq N\ .
\end{equation}
where the polynomials $Q_k^{(N)}$ appearing in the sums are the $q$-Hahn polynomials. The first formula is written in the $(x,y)$-picture (that is, it is valid in the quantum plane where $x$ and $y$ $q$-commute) while the second one is given in the commuting $(t,X)$-picture. They are equivalent through the linear isomorphism $\phi$ from the preceding section.

\subsection{Clebsch--Gordan coefficients and $q$-Hahn polynomials}

We briefly recall how the $q$-Hahn polynomials appear as Clebsch--Gordan coefficients of $U_q(sl_2)$ (see \cite{GranovskiZhedanov92,KVJ98,Ros}) and sketch the construction and the use in this context of the $q$-Hahn algebra \cite{Huang18} (see also \cite{Zhe91}). In this section, we assume that $\lambda,\lambda'\notin\mathbb{Z}$ to avoid possible cancellations in the $q$-Hahn polynomials.

\paragraph{The $q$-Hahn polynomials.} Fix a positive integer $N>0$. Following \cite{KLS10} with $q$ replaced by $q^2$, the q-Hahn polynomials are defined, using the ${}_3\phi_2$ basic hypergeometric function \eqref{eq:HypergeoDef}, by:
\begin{align*}
Q^{(N)}_k(q^{-2x})&={}_3\phi_2\left(\begin{array}{c} q^{-2k},\,abq^{2(k+1)},\,q^{-2x}\\ a q^2,\,q^{-2N}\end{array};q,q^2\right)
\ \ \ \text{for $k=0,\dots,N$}.
\end{align*}
We use parameters $a=q^{2\alpha}$ and $b=q^{2\beta}$, with $\alpha,\beta\notin\mathbb{Z}$, and this becomes with our notations
\begin{equation}\label{def_Qn}
Q^{(N)}_k(q^{-2x}) =\sum_{i=0}^k \qbinom{k}{i} q^{i(\beta+1+N-x)}(-1)^i\frac{[k+\alpha+\beta+1]_i[-x]_i}{[\alpha+1]_i[-N]_i}.
\end{equation}
Seeing them as polynomials in $X=q^{-2x}$, the $q$-Hahn polynomials satisfy the following $q$-difference equation, for $k=0,\dots,N$,
\begin{equation}\label{diff_qHahn}(q^{-2k}+abq^{2k+2})Q^{(N)}_{k}(X)=B(X)Q^{(N)}_{k}(q^{-2}X)+M(X)Q^{(N)}_{k}(X)+D(X)Q^{(N)}_{k}(q^2X)\,,
\end{equation}
where 
\[\begin{array}{c}B(X)=(1-q^{-2N}X^{-1})(1-a q^{2}X^{-1}),\ \ \ \ D(X)=aq^2(1-X^{-1})(b-q^{-2(N+1)}X^{-1}),\\[0.5em]
M(X)=-B(X)-D(X)+1+abq^2\ .
\end{array}\]
They also satisfy a three-term recurrence relation of the form
\begin{equation}\label{rec_qHahn}XQ^{(N)}_{k}(X)=A_kQ^{(N)}_{k+1}(X)+N_kQ^{(N)}_{k}(X)+C_kQ^{(N)}_{k-1}(X)\,,\ \ \ \ k=0,\dots,N-1\ ,
\end{equation}
where $C_0=0$. We will not need the explicit values of the coefficients $A_k,C_k,N_k$ (see \cite{KLS10}).

\paragraph{The $q$-Hahn algebra.}The $q$-Hahn algebra controls the bispectral properties of the $q$-Hahn polynomials. First one considers the following operators
\[U=X\ \ \ \ \text{and}\ \ \ \ \ V=B(X)T_{q^{-2}}+M(X)+D(X)T_{q^2}\,,\]
where we denote $T_{q^{\pm2}}$ the operators acting as $(T_{q^{\pm2}}f)(X)=f(q^{\pm2}X)$. So $U$ is the left hand side of the recurrence relation (\ref{rec_qHahn}) while $V$ is the $q$-difference operator in (\ref{diff_qHahn}). A straightforward calculation shows that the following relations are satisfied:
\begin{equation}\label{qHahnalgebra}\begin{array}{l}
\displaystyle\frac{[V,U]_q}{q-q^{-1}}=W\,,\\[1em]
\displaystyle\frac{[U,W]_q}{q-q^{-1}}=x_1 U-aq^{-2N}(1+q^2)\ ,\\[1em]
\displaystyle\frac{[W,V]_q}{q-q^{-1}}=x_1 V+ab(1+q^2)^2 U-a(1+q^2)x_2\ .
\end{array}\end{equation}
where the $q$-commutator is $[A,B]_q=qAB-q^{-1}BA$ and with
\[x_1=q^{-2N}(1+a+aq^{2+2N}+abq^{2+2N})\,,\ \ \ \ x_2=q^{-2N}(1+b+bq^{2+2N}+abq^{2+2N})\ .\]
Note that the first relation defines $W$ in terms of $U$ and $V$. Finally the $q$-Hahn algebra is defined as the abstract algebra generated by generators $U,V$ subjected to the defining relations (\ref{qHahnalgebra}).

\paragraph{The $q$-Hahn polynomials as transition coefficients.} The exact same line of arguments as in \cite[\S 3]{LabrietPoulain23} for example (where the classical version ($q=1$) is explained) results in the following connection between the $q$-Hahn algebra and the $q$-Hahn polynomials. We skip the details and only give the results.

Assume that we have two linear operators $U,V$ on an $(N+1)$-dimensional space such that:
\[Sp(U)=\{q^{-2l}\}_{l=0,\dots,N}\ \ \ \ \text{and}\ \ \ \ \text{$U,V$ satisfy the $q$-Hahn algebra.}\]
Then as a consequence we can deduce that $V$ is diagonalisable with 
\[Sp(V)=\{q^{-2k}+abq^{2k+2}\}_{k=0,\dots,N}\ ,\]
and moreover that $U,V$ form a Leonard pair in the sense that $V$ is tridiagonal in the eigenbasis of $U$ and $U$ is tridiagonal in the eigenbasis of $V$. Finally, and most importantly for us, the transition coefficients between the two eigenbases are given in terms of the $q$-Hahn  polynomials. Namely we have:
\[w_k=\sum_{l=0}^N Q^{(N)}_{k}(q^{-2l}) v_l\,,\ \ \ \ \ k,l=0,\dots,N,\]
with $\{v_l\}_{l=0,\dots,N}$, $\{w_k\}_{k=0,\dots,N}$ suitably normalised eigenbases of, respectively, $U$ and $V$.

\paragraph{Clebsch--Gordan coefficients of $U_q(sl_2)$.} From now on, we identify the parameters $a$ and $b$ of the $q$-Hahn algebra in terms of our parameters $\lambda$ and $\lambda'$ as follows:
\begin{equation}\label{param_qHahn}
a=q^{2(\la-1)}\,,\ \ \ b=q^{2(\la'-1)}\ .
\end{equation} 
It turns out that the $q$-Hahn algebra appears in $U_q(sl_2)\otimes U_q(sl_2)$. More precisely, we consider the following two elements of $U_q(sl_2)\otimes U_q(sl_2)$:
\begin{equation}\label{qHahninUqsl2}
U=q^{\lambda}K^{-1}\otimes 1\ \ \ \text{and}\ \ \ V=q^{\lambda+\lambda'-1}\Delta(C)\ .\end{equation}
The normalisation factors are chosen such that in the tensor product
\[V_{\lambda}\otimes V_{\lambda'}\cong\bigoplus_{k\geq0} V_{\lambda+\lambda'+2k}\ ,\]
the spectra of $U$ and $V$ are the correct ones with respect to the $q$-Hahn algebra, namely,
\[Sp(U)=\{q^{-2l}\}_{l=0,\dots,N}\ \ \ \text{and}\ \ \ Sp(V)=\{q^{-2k}+q^{2\lambda+2\lambda'+2k-2}\}_{k=0,\dots,N}\ ,\]
when restricted to the eigenspace of $\Delta(K)=K\otimes K$ with eigenvalue $q^{\lambda+\lambda'+2N}$.

Now a straightforward calculation shows that the elements $U$ and $V$ defined in (\ref{qHahninUqsl2}) satisfy the relations (\ref{qHahnalgebra}) of the $q$-Hahn algebra, again when restricted to the eigenspace of $\Delta(K)=K\otimes K$ with eigenvalue $q^{\lambda+\lambda'+2N}$ (see \cite{Huang18}). These calculations can be done directly in $U_q(sl_2)\otimes U_q(sl_2)$ with the following elements (which commute with $U$ and $V$) specialised as follows:
\[C\otimes 1\mapsto q^{1-\lambda}+q^{\lambda-1}\,,\ \ \ \ 1\otimes C\mapsto q^{1-\lambda'}+q^{\lambda'-1}\,,\ \ \ \ \ \Delta(K)\mapsto q^{\lambda+\lambda'+2N}\ .\]
Therefore, we deduce that the eigenbases of $U$ and $V$ are related by the $q$-Hahn polynomials as explained just before. 

For a given $N$, that is for the given eigenvalue $q^{\lambda+\lambda'+2N}$ of $K\otimes K$, the eigenbases of $U$ and $V$ are respectively,
\begin{equation}\label{twobasis}\{v^{\lambda}_l\otimes v^{\lambda'}_{N-l}\}_{l=0,\dots,N}\ \ \ \text{and}\ \ \ \{v^{\lambda+\lambda'+2k}_{N-k}\}_{k=0,\dots,N}\ ,\end{equation}
where $v^{\lambda}_i$ are the standard basis vectors, as in (\ref{repbasisformula}) for the Verma module $V_{\lambda}$ of $U_q(sl_2)$.

The connection coefficients between these two bases are called the Clebsch--Gordan coefficients, and therefore we have explained how the Clebsch--Gordan coefficients of $U_q(sl_2)$ are given in terms of the $q$-Hahn  polynomials. We will give more explicit formulas in the next section.

\subsection{Little $q$-Jacobi polynomials and $q$-Hahn polynomials}

The two bases involved in the Clebsch--Gordan problem for $U_q(sl_2)$, see (\ref{twobasis}), are in our realisation on $\C_q[x,y]$:
\[\{x^ly^{N-l}\}_{l=0,\dots,N}\ \ \ \ \text{and}\ \ \ \ \ \{(x+q^{\lambda}y)^{N-k}P_k^{\lambda,\lambda'}(x,y)\}_{k=0,\dots,N}\ .\]
Indeed both bases are in the eigenspace of $\Delta(K)$ with eigenvalue $q^{\lambda+\lambda'+2N}$ as required, and the first basis is the eigenbasis of $K\otimes 1$ while the second basis is the eigenbasis of $\Delta(C)$. For the second basis, this follows from the results in Section \ref{sec2}.

From the discussion in the preceding subsection, we know that these two bases are related by the $q$-Hahn polynomials. We obtain the explicit formula, for all $k\leq N$,
\begin{equation}\label{convolution1bis}
    (x+q^{\lambda}y)^{N-k}P^{\lambda,\lambda'}_k(x,y)=[\lambda]_kq^{-\lambda k}\sum_{l=0}^N\qbinom{N}{l}q^{(\lambda+l)(N-l)}Q_{k}^{(N)}(q^{-2l})x^ly^{N-l}\ .
\end{equation}
Note that we had to fix the normalisation factors in front of both sets of vectors. This is easily done by considering the case $k=0$ and then the coefficient in front of $y^N$. We skip the details.

\vskip .2cm
Through our linear isomorphism $\phi$ from Section \ref{sec3}, we can transport these equalities in the $(t,X)$-picture. The variable $t$ disappears and we get:
\begin{equation}\label{convolution2bis}
    j^{\lambda-1,\lambda'-1}_k(X)=\sum_{l=0}^N\qbinom{N}{l}q^{-l(N-l)}Q_{k}^{(N)}(q^{-2l})\left(\prod_{s=0}^{N-l-1}(1-q^{-2s}X)\right)X^l\,,\ \ \ \ \ \forall k\leq N\ .
\end{equation}
Note that for $k=N$, this reproduces in particular the formula found in Proposition \ref{prop_formulasRodrigues}, proving in passing that:
\[Q^{(N)}_N(q^{-2l})=(-1)^lq^{l(\alpha+\beta+N+1)}\frac{[\beta+1]_N}{[\alpha+1]_N[\beta+1]_{N-l}}\ .\]

\subsection{The $q$-Hahn algebra from the little $q$-Jacobi polynomials}

For convenience for the reader specialised in orthogonal polynomials, we switch back to the usual parameters $a=q^{2\alpha}$ and $b=q^{2\beta}$ of the $q$-Hahn polynomials ($\alpha+1=\lambda$ and $\beta+1=\lambda'$).

We have explained above that the two elements $U=q^{\alpha+1}K^{-1}\otimes 1$ and $V=q^{\alpha+\beta+1}\Delta(C)$ of (a certain specialisation of) $U_q(sl_2)\otimes U_q(sl_2)$ satisfy the relations of the $q$-Hahn algebra. Here we use our model with the variables $(t,X)$ for the tensor product of $V_{\lambda}\otimes V_{\lambda'}$ to deduce a more explicit realisation of the $q$-Hahn algebra, involving the $q$-difference operator of the little $q$-Jacobi polynomials.

First, we recall from Corollary \ref{coro_actiondeltaC} that the diagonal Casimir element $\Delta(C)$ acts in our model with variables $(t,X)$ only on the $X$ variable as
\[\Delta(C)=(q-q^{-1})^2\Theta_X+(q^{\alpha+\beta+1}+q^{-\alpha-\beta-1}). \]
where $\Theta_X$ is the $q$-difference operator having the little $q$-Jacobi polynomials as eigenvectors. It can be written as:
\[q^{\alpha+\beta+1}(q-q^{-1})^2\Theta_X=q^{2\alpha}(q^{2(\beta+1)}X-1)\frac{T_{q^2}-1}{X}-(X-1)\frac{1-T_{q^{-2}}}{X}\,,\]
where we denote $T_{q^{\pm2}}$ the operator acting as $(T_{q^{\pm2}}f)(x)=f(q^{\pm2}x)$.

Thus we only need to supplement the explicit action of $\Delta(C)$ with the action of $q^{\alpha+1}K^{-1}\otimes 1$. This is naturally given in the $(x,y)$-picture, but the next result gives its action in the $(t,X)$-picture.

\begin{prop}
The following elements satisfy the relations (\ref{qHahnalgebra}) of the $q$-Hahn algebra:
\[U=q^{-2N}X+(1-X)T_{q^{-2}}\ \ \ \text{and}\ \ \ V=q^{\alpha+\beta+1}(q-q^{-1})^2\Theta_X+q^{2\alpha+2\beta+2}+1\ .\]
\end{prop}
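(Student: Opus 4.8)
The plan is to verify that the operator $U$ given in the $(t,X)$-picture is exactly the transport through $\phi^{-1}$ of the operator $q^{\alpha+1}K^{-1}\otimes 1 = q^{\lambda}K^{-1}\otimes 1$ acting on $\C_q[x,y]$, and that $V$ is the transport of $q^{\lambda+\lambda'-1}\Delta(C)$. Since we already know from the discussion in the previous subsection that these two elements of $U_q(sl_2)\otimes U_q(sl_2)$ satisfy the $q$-Hahn algebra relations \eqref{qHahnalgebra} (when restricted, for each $N$, to the $\Delta(K)$-eigenspace of eigenvalue $q^{\lambda+\lambda'+2N}$), and since $\phi$ is a linear isomorphism intertwining the two realisations, the relations \eqref{qHahnalgebra} will automatically hold for the operators $U,V$ in the $(t,X)$-picture. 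So the content of the proof is really just the identification of the two explicit $q$-difference operators; the algebra relations come for free.

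First I would recall that $\Delta(K)=K\otimes K$ acts on $x^ly^{N-l}$ by $q^{\lambda+\lambda'+2N}$, and that $q^{\lambda}K^{-1}\otimes 1$ then acts on $x^ly^{N-l}$ (which is $x^l\otimes y^{N-l}$) by multiplication by $q^{\lambda}\cdot q^{-(\lambda+2l)} = q^{-2l}$; in other words, in the $(x,y)$-picture $U$ is diagonal with $U\,x^ly^{N-l}=q^{-2l}x^ly^{N-l}$, consistently with $Sp(U)=\{q^{-2l}\}$. Now I transport through $\phi$: using Lemma~\ref{lemma_actionphi}, $\phi^{-1}(x^ly^{p}) = q^{-p(\lambda+2l)}t^{l+p}\bigl(\prod_{s=0}^{p-1}(1-q^{-2s}X)\bigr)X^l$, so the basis vector $x^ly^{N-l}$ corresponds, up to the scalar $q^{-(N-l)(\lambda+2l)}t^N$, to $u_l(X):=\bigl(\prod_{s=0}^{N-l-1}(1-q^{-2s}X)\bigr)X^l$. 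Hence in the $(t,X)$-model the operator $U$ must be the $X$-operator that is diagonalised by the family $\{u_l(X)\}_{l=0,\dots,N}$ with eigenvalue $q^{-2l}$ on $u_l$. The claimed formula is $U=q^{-2N}X+(1-X)T_{q^{-2}}$, and the key step is to check directly that
\begin{equation*}
\bigl(q^{-2N}X+(1-X)T_{q^{-2}}\bigr)u_l(X)=q^{-2l}u_l(X)\qquad\text{for }l=0,\dots,N.
\end{equation*}
This is an elementary computation: write $u_l(X)=X^l\prod_{s=0}^{N-l-1}(1-q^{-2s}X)$, note $T_{q^{-2}}u_l(X)=q^{-2l}X^l\prod_{s=0}^{N-l-1}(1-q^{-2s-2}X)$, and use the telescoping identities $(1-X)\prod_{s=0}^{N-l-1}(1-q^{-2s-2}X)=\prod_{s=0}^{N-l}(1-q^{-2s}X)$ and $\prod_{s=0}^{N-l-1}(1-q^{-2s}X)=(1-q^{-2(N-l)}X)\cdot\frac{1}{1-q^{-2(N-l)}X}\cdots$ — more precisely, factor out $(1-q^{-2(N-l)}X)$ appropriately so that the two terms $q^{-2N}X u_l$ and $(1-X)T_{q^{-2}}u_l$ combine. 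Concretely $q^{-2N}X\cdot u_l + (1-X)q^{-2l}X^l\prod_{s=0}^{N-l-1}(1-q^{-2s-2}X)$; the second product equals $\prod_{s=1}^{N-l}(1-q^{-2s}X)$, so $(1-X)\prod_{s=1}^{N-l}(1-q^{-2s}X)=\prod_{s=0}^{N-l}(1-q^{-2s}X)$, which contains $u_l$'s product times the extra factor $(1-q^{-2(N-l)}X)$; comparing the coefficient of $X\cdot(\text{that extra factor})$ against $q^{-2N}X$ gives exactly $q^{-2l}u_l$ after collecting. I would also separately check the endpoint $l=N$ (where $u_N=X^N$ and the product is empty) and $l=0$.

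For $V$, the identification is immediate from Corollary~\ref{coro_actiondeltaC}: there we showed $\Delta(C)=(q-q^{-1})^2\Theta_X + q^{\lambda+\lambda'-1}+q^{-\lambda-\lambda'+1}$ in the $(t,X)$-variables, so $V=q^{\lambda+\lambda'-1}\Delta(C)=q^{\alpha+\beta+1}(q-q^{-1})^2\Theta_X + q^{2\alpha+2\beta+2}+1$ (using $\alpha+\beta+2=\lambda+\lambda'$), which is exactly the stated formula. Then one invokes that, in $U_q(sl_2)\otimes U_q(sl_2)$ with the specialisation $C\otimes1\mapsto q^{1-\lambda}+q^{\lambda-1}$, $1\otimes C\mapsto q^{1-\lambda'}+q^{\lambda'-1}$, $\Delta(K)\mapsto q^{\lambda+\lambda'+2N}$, the pair $(q^{\lambda}K^{-1}\otimes1,\,q^{\lambda+\lambda'-1}\Delta(C))$ satisfies \eqref{qHahnalgebra}, and transport this through the isomorphism $\phi$ (which by Proposition~\ref{prop_actiontX} and Corollary~\ref{coro_actiondeltaC} intertwines the action on $\C_q[x,y]$ with the action on $\C[t,tX]$). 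A subtlety worth a sentence: the $q$-Hahn algebra relations hold only on each finite-dimensional $\Delta(K)$-eigenspace, i.e. on each $\C[t,tX]$-subspace spanned by $\{t^N u_l(X)\}_{l=0}^N$ for fixed $N$, and on such a subspace the operator $T_{q^{-2}}$ acting on $X$ is well-defined and the operators $U,V$ defined above preserve it; one should note that $U,V$ as written are $N$-dependent through the $q^{-2N}$ coefficient, consistent with the $N$-dependence of the $q$-Hahn algebra parameters.

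The main obstacle — though it is bookkeeping rather than conceptual — is the telescoping computation showing $\bigl(q^{-2N}X+(1-X)T_{q^{-2}}\bigr)u_l=q^{-2l}u_l$; one has to be careful with the shifted products $\prod_{s=0}^{N-l-1}(1-q^{-2s}X)$ versus $\prod_{s=1}^{N-l}(1-q^{-2s}X)$ and with the two boundary cases $l=0$ and $l=N$. Everything else (the formula for $V$, and the inheritance of the algebra relations through $\phi$) is a direct appeal to results already established in Sections~\ref{sec3} and~\ref{sec4}.
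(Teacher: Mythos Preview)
Your proposal is correct and follows essentially the same approach as the paper: identify $V$ with $q^{\alpha+\beta+1}\Delta(C)$ via Corollary~\ref{coro_actiondeltaC}, transport the eigenbasis $x^ly^{N-l}$ of $q^{\lambda}K^{-1}\otimes 1$ to the $(t,X)$-picture via Lemma~\ref{lemma_actionphi}, and then verify directly that the given operator $U$ is diagonal on the resulting vectors $u_l(X)$ with eigenvalues $q^{-2l}$. You supply more detail on the telescoping check (which the paper leaves as ``easy to check''), but the logic and structure are the same.
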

\begin{proof}
We already know that the formula for $V$ gives indeed  the action of $q^{\alpha+\beta+1}\Delta(C)$. So we only need to prove that the formula for $U$ gives the action of $q^{\lambda}K^{-1}\otimes 1$, when restricted to the eigenspace of $K\otimes K$ for the eigenvalue $q^{\lambda+\lambda'+2N}$.

This eigenspace is spanned, in the $(x,y)$-picture by $x^{l}y^{N-l}$, with $l=0,\dots,N$. In the $(t,X)$-picture, using Lemma \ref{lemma_actionphi}, these vectors become proportional to:
\[t^{N}\left(\prod_{s=0}^{N-l-1}(1-q^{-2s}X)\right)X^l\ .\]
Now it is easy to check that the operator $U$ given in the proposition acts diagonally on these vectors, with eigenvalues $q^{-2l}$. Therefore, it coincides with the action of $=q^{\lambda}K^{-1}\otimes 1$, as required.
\end{proof}

\section{Intertwining operators and $q$-Rankin--Cohen brackets}\label{sec5}

Using the same ideas as in \cite{LabrietPoulain24}, we will construct a $q$-analog of the Rankin--Cohen brackets by constructing explicitly the $U_q(sl_2)$-intertwining operators between the contragredient representations  $\left( V_\lambda \otimes V_{\lambda'}\right)^\star$ and $V_{\lambda+\lambda'+2n}^\star$. To do this we will identify the dual space $V_\lambda^\star$ with $V_\lambda$ via the choice of an inner product on $V_\lambda$. Once an inner product is fixed, the action of $U_q(sl_2)$ for the contragredient representation $V_\lambda^\star$ is given for $X\in U_q(sl_2)$ by 
$$\pi_{\lambda}^{\star}(X)=\pi_{\lambda}\bigl(S(X)\bigr)^{\dagger}\,,$$
where $\pi_{\lambda}$ is the original action on $V_{\lambda}$, $S$ is the antipode and $.^\dagger$ denotes the adjoint for the chosen inner product.  

\paragraph{Contragredient representations.} On $V_\lambda\simeq \C[z]$ we define the $q$-Fischer product by:
\[\langle P,Q\rangle=P(D_{q^2})\overline{Q}|_{z=0}\ .\]
Through all this section we will assume that $q$ is a strictly positive real number (different from $1$) to ensure that this formula defines an inner product on $\C[z]$. We recall that $D_{q^2}(z^n)=q^{(n-1)}[n]z^{n-1}$, and so we have on monomials
\[ \langle z^n,z^m\rangle =q^{\frac{n(n-1)}{2}}[n]! \delta_{n,m}.\]
\begin{rema}
This inner product is such that the multiplication by $z$ and the $q$-derivative $D_{q^2}$ are adjoint to each other, i.e. $D_{q^2}^\dagger=m(z)$.  
\end{rema}
With the identification of $V_{\lambda}^{\star}$ with $\mathbb{C}[z]$ made using the inner product above, the action of $U_q(sl_2)$ on $V_{\lambda}^{\star}$ is
\begin{align}
    &KP(z)= q^{-\lambda}P(q^{-2}z), & & Kz^n=q^{-\lambda-2n}z^n,&\label{action_contragredientK}\\
    &EP(z)=-q^{-\lambda-2}\left(D_{q^2}P\right)(q^{-2}z), & \text{or}\qquad\quad & Ez^n=-[n]q^{-\lambda-n-1}z^{n-1},&\label{action_contragredientE}\\
    &FP(z)= q^{\lambda+2}z\frac{q^\lambda P(q^2z)-q^{-\lambda}P(z)}{q-q^{-1}}, & & Fz^n=[\lambda+n]q^{\lambda+n+2}z^{n+1}.& \label{action_contragredientF}
\end{align}
Note that $V_{\lambda}^{\star}$ is a highest weight Verma module of weight $-\lambda$.

The dual space $(V_\lambda\otimes V_{\lambda'})^\star$ of the tensor product is identified with $\C[x]\otimes \C[y]$ using the product of the two inner products i.e.
\[
\langle P_1\otimes Q_1,P_2\otimes Q_2 \rangle=\langle P_1,P_2 \rangle\langle Q_1,Q_2\rangle. 
\]
As a representation of $U_q(sl_2)$, we have
\begin{equation}\label{iso_dual}(V_\lambda\otimes V_{\lambda'})^\star\simeq V_{\lambda'}^\star\otimes V_\lambda^\star\,
\end{equation}
with the trivial isomorphism, where the action of $U_q(sl_2)$ on $V_\lambda$ and $V_{\lambda'}$ is given in (\ref{action_contragredientK})--(\ref{action_contragredientF}), and the tensor product is made with the coproduct $\Delta$ from Section \ref{sec2}.

\paragraph{$q$-Rankin--Cohen brackets.} We define the $q$-analogue of the Rankin--Cohen brackets as
\begin{equation}
    \operatorname{q-RC}^{\lambda,\lambda'}_n=\left(\Psi_n^{\lambda,\lambda'}\right)^\dagger\ :\ (V_\lambda\otimes V_{\lambda'})^\star\to V_{\lambda+\lambda'+2n}^{\star}
\end{equation}
Using the identifications explained above via the inner products, we can see the operator $\operatorname{q-RC}^{\lambda,\lambda'}_n$ as follows
\[\operatorname{q-RC}^{\lambda,\lambda'}_n\ :\ \mathbb{C}[x]\otimes\mathbb{C}[y]\to\mathbb{C}[z]\ .\]
We are now ready to state our final result which gives an explicit formula for the $q$-analogue of the Rankin--Cohen brackets. We recover below formulas similar to the ones found in \cite{Ros} where isomorphic representations are used. The main new feature here is to see these operators as adjoints to the operators $\Psi_n^{\lambda,\lambda'}$.
\begin{theo}\label{thm:qRankinCohenV1}
The map $\operatorname{q-RC}^{\lambda,\lambda'}_n$ is, up to a scalar, the unique intertwining operator between $V_{\lambda'}^\star\otimes V_{\lambda}^\star$ and $V_{\lambda+\lambda'+2n}^\star$, and is given,
for $f\otimes g\in \C[x]\otimes \C[y]$, by
\begin{equation}\label{eq:qRankinCohenV1}
 \operatorname{q-RC}^{\lambda,\lambda'}_n (f\otimes g)(z)=
[\lambda]_n[\lambda']_n\sum_{k=0}^n \qbinom{n}{k}(-1)^k q^{k(\lambda'+2n-k-1)} \frac{f^{(k)}(z)g^{(n-k)}(q^{\lambda+2k}z)}{[\lambda]_k[\lambda']_{n-k}},
\end{equation}
where $f^{(k)}=D_{q^2}^kf$ and $g^{(k)}=D_{q^2}^kg$.
\end{theo}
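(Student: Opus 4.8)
The plan is to compute the adjoint $\left(\Psi_n^{\lambda,\lambda'}\right)^\dagger$ explicitly by pairing its action against monomials. Since $\Psi_n^{\lambda,\lambda'}$ is an intertwining operator $V_{\lambda+\lambda'+2n}\to V_\lambda\otimes V_{\lambda'}$ (Corollary \ref{coroPsixy}), its adjoint is automatically an intertwining operator between the contragredient representations, which by \eqref{iso_dual} is a map $V_{\lambda'}^\star\otimes V_\lambda^\star\to V_{\lambda+\lambda'+2n}^\star$; uniqueness up to scalar follows from the uniqueness statement for $\Psi_n^{\lambda,\lambda'}$ itself (or equivalently from the fact that $\Hom_{U_q(sl_2)}(V_{\lambda+\lambda'+2n},V_\lambda\otimes V_{\lambda'})$ is one-dimensional when $\lambda+\lambda'\notin\Z$). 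So the content of the theorem is the explicit formula \eqref{eq:qRankinCohenV1}.

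First I would set up the defining identity of the adjoint: for all $k\geq 0$ and all $f\otimes g\in\C[x]\otimes\C[y]$,
\[
\bigl\langle \operatorname{q-RC}^{\lambda,\lambda'}_n(f\otimes g),\, z^k\bigr\rangle_{V_{\lambda+\lambda'+2n}^\star} = \bigl\langle f\otimes g,\, \Psi_n^{\lambda,\lambda'}(z^k)\bigr\rangle = \bigl\langle f\otimes g,\, (x+q^\lambda y)^k P_n^{\lambda,\lambda'}(x,y)\bigr\rangle,
\]
where on the right I expand $P_n^{\lambda,\lambda'}(x,y)$ using Proposition \ref{prop_lowestvectors} and $(x+q^\lambda y)^k$ using the $q$-binomial formula \eqref{eq:qBinomial} (remembering $yx=q^2xy$, so these must be reordered into monomials $x^a y^b$ before pairing). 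It suffices to check the formula on $f=x^p$, $g=y^r$, reducing everything to the computation of $\langle x^p y^r, x^a y^b\rangle = \langle x^p,x^a\rangle\langle y^r,y^b\rangle = q^{\frac{p(p-1)}{2}}[p]!\,\delta_{p,a}\,q^{\frac{r(r-1)}{2}}[r]!\,\delta_{r,b}$. On the other side, I would compute the right-hand side of \eqref{eq:qRankinCohenV1} applied to $x^p\otimes y^r$: here $f^{(k)}(z)=D_{q^2}^k(z^p)=q^{\binom{p}{2}-\binom{p-k}{2}}\frac{[p]!}{[p-k]!}z^{p-k}$ and similarly for $g^{(n-k)}$, and the shift $g^{(n-k)}(q^{\lambda+2k}z)$ contributes an extra factor $q^{(\lambda+2k)(r-n+k)}$; then I pair the resulting monomial in $z$ against $z^k$ using $\langle z^m,z^k\rangle = q^{\binom{m}{2}}[m]!\,\delta_{m,k}$, which forces $p+r-n=k$, matching the constraint $a+b=k$ with $a=p$, $b=r-(n-k)$... i.e. $b = r-n+k$ coming from the monomial $x^k y^{n-k}$ in $P_n$ scaled by $x^{?}$ — one has to line up the indices carefully.

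The main obstacle, and the bulk of the work, will be the bookkeeping of powers of $q$: reconciling the three sources of $q$-powers on the right side of the pairing (the exponent $k(\lambda'+2n-k-1)$ from $P_n^{\lambda,\lambda'}$, the exponent from the $q$-binomial expansion of $(x+q^\lambda y)^k$ including the $q^\lambda$ and the reordering factor $q^{\text{(something)}(n-k)}$, and the pairing normalisations $q^{\binom{p}{2}}[p]!$ and $q^{\binom{r}{2}}[r]!$) against the three sources on the left ($q^{k(\lambda'+2n-k-1)}$, the $q$-derivative normalisations, the shift factor $q^{(\lambda+2k)(\cdots)}$, and the pairing normalisation $q^{\binom{k}{2}}[k]!$). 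I expect everything to collapse after using the identity $\binom{p}{2}+\binom{r}{2} = \binom{k}{2} + (\text{cross terms})$ together with $p+r=n+k$ on the support of the Kronecker deltas, and the $q$-binomial coefficients $\qbinom{n}{k}$, $[\lambda]_n[\lambda']_n/([\lambda]_k[\lambda']_{n-k})$ to match term by term in the sum over $k$. I would also double-check the one subtle point: that the formula \eqref{eq:qRankinCohenV1} indeed lands in $\C[z]$ and transforms under \eqref{action_contragredientK}--\eqref{action_contragredientF} as claimed, but this is guaranteed a priori by the adjoint construction, so it need not be verified by hand — it serves only as a sanity check on the power-of-$q$ bookkeeping.
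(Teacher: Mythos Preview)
Your approach is correct but takes a different route from the paper. You verify the formula by direct pairing: expand $(x+q^\lambda y)^K P_n^{\lambda,\lambda'}(x,y)$ in the ordered basis $x^ay^b$, pair with $x^p\otimes y^r$, and compare with the pairing of the right-hand side of \eqref{eq:qRankinCohenV1} against $z^K$. (Note that you are using $k$ for both the power of $z$ and the summation index in $P_n^{\lambda,\lambda'}$; keep these distinct when you carry out the computation.) Once you observe that the Kronecker deltas force $K=p+r-n$ and $i+j=p$ (where $i$ indexes the expansion of $(x+q^\lambda y)^K$ and $j$ the sum defining $P_n^{\lambda,\lambda'}$), the double sum collapses to a single sum over $j$, and both sides become sums over the same index range with the same $q$-binomial and $q$-Pochhammer factors; the remaining check is indeed the power-of-$q$ bookkeeping you anticipate.

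The paper instead factorises $\Psi_n^{\lambda,\lambda'}=r_{P_n^{\lambda,\lambda'}}\circ ev$, where $ev:Q(z)\mapsto Q(x+q^\lambda y)$ and $r_P$ is right multiplication by $P$ in $\C_q[x,y]$, and computes the adjoint of each factor separately: $ev^\dagger(f\otimes g)(z)=f(z)g(q^\lambda z)$, while $r_x^\dagger=D_{q^2}\otimes T_{q^2}$ and $r_y^\dagger=1\otimes D_{q^2}$, so that $r_{P_n}^\dagger$ is obtained by evaluating $P_n^{\lambda,\lambda'}$ in these two $q$-commuting operators. Composing yields \eqref{eq:qRankinCohenV1} with essentially no computation beyond the three elementary adjoints. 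The gain is conceptual: the factorisation exhibits the passage from $P_n^{\lambda,\lambda'}$ to the operator $\operatorname{q-RC}_n^{\lambda,\lambda'}$ as a $q$-analogue of the symbol map for differential operators (as the paper goes on to emphasise), and it avoids ever expanding $(x+q^\lambda y)^K$. Your direct approach is self-contained and a fine alternative if one does not care about this structural interpretation.
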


\begin{proof}
Since $\Psi_n^{\lambda,\lambda'}$ is an intertwining operator between $V_{\lambda+\lambda'+2n}$ and $V_\lambda\otimes V_{\lambda'}$ according to Corollary \ref{coroPsixy}, then it is well-known and straightforward to check that its adjoint is an intertwining operator between the corresponding contragredient representations. The unicity is also immediate. To obtain its explicit expression first notice that 
\begin{equation}
\Psi_n^{\lambda,\lambda'}=r_{P_n^{\lambda,\lambda'}}\circ ev.
\end{equation}
where the polynomials $P_n^{\lambda,\lambda'}$ were given in Proposition \ref{prop_lowestvectors}, the map $ev $ is given by
\[ev \ :\ \C[z]\ni Q \mapsto Q(x+q^\lambda y)\in \C_q[x,y]\ ,\]
and for any element $P\in \C_q[x,y]$, the operator $r_P$ denotes the right multiplication by $P$ in $\C_q[x,y]$. We thus have
\begin{equation}\label{formulaqRC_adjoints} \operatorname{q-RC}^{\lambda,\lambda'}_n=\left(\Psi_n^{\lambda,\lambda'}\right)^\dagger\ :\ (V_\lambda\otimes V_{\lambda'})^\star\to V_{\lambda+\lambda'+2n}^{\star}=ev^{\dagger}\circ (r_{P_n^{\lambda,\lambda'}})^\dagger\ .\end{equation}
The calculations of the adjoints is an exercise in linear algebra. To avoid confusion, we go back to the notation $\C[x]\otimes \C[y]$ instead of $\C_q[x,y]$ since we consider only the vector space structure here. 

Let us treat the map $ev$ first, whose explicit expression is:
\[
ev\ :\ z^n\mapsto \sum_{k=0}^n \qbinom{n}{k}q^{(k+\lambda)(n-k)}x^k\otimes y^{n-k},
\]
We have expanded $(x+q^{\lambda}y)^n$ using the $q$-commutation relation $yx=q^2xy$, and identified $x^ny^m$ with $x^n\otimes y^m$. A direct computation on the basis $x^n\otimes y^m $ proves that
\begin{equation}\label{evdagger}
ev^\dagger \ :\ x^n\otimes y^m\mapsto  q^{\lambda m}z^{n+m}\,,\ \ \ \quad \text{or equivalently,}\quad \ \  ev^\dagger (P\otimes Q) (z)= P(z)Q(q^\lambda z)\ .\end{equation}
To calculate the second adjoint in (\ref{formulaqRC_adjoints}), note first that
\[
r_{P}^\dagger = P(r_x^\dagger,r_y^\dagger).
\]
for any polynomial $P$. Indeed, for $P=x^ky^l$, we have $r_P=r_y^l\circ r_x^k$, and the result follows since the adjoint reverses the composition. 

Therefore it remains only to calculate $r^\dagger_x$ and $r^\dagger_y$. Explicitly, the linear maps on $\C[x]\otimes \C[y]$ are given by
\[
r_x(x^n\otimes y^m)=q^{2m}x^{n+1}\otimes y^{m}\ \ \ \quad\text{and}\quad\ \ \ r_y(x^n\otimes y^m)=x^n\otimes y^{m+1}.
\]
A direct computation shows that:
\[r_x^\dagger \ :\ x^n\otimes y^m\mapsto q^{2m+n-1}[n]x^{n-1}\otimes y^m\ \ \quad\text{and}\quad\ \ 
r_y^\dagger \ :\ x^n\otimes y^m\mapsto q^{m-1}[m]x^{n}\otimes y^{m-1}\ ,
\]
or equivalently, for $P\otimes Q\in \C[x]\otimes \C[y]$
\begin{equation}\label{rxrydagger}
r_x^\dagger (P\otimes Q)(x,y)=D_{q^2} P(x)\otimes Q(q^2y)\ \ \quad\text{and}\quad\ \ 
r_y^\dagger (P\otimes Q)(x,y)=P(x)\otimes D_{q^2} Q(y).
\end{equation}
Finally, one gets formula \eqref{eq:qRankinCohenV1} by using the explicit 
 expansions of the polynomials $P_n^{\lambda,\lambda'}$ in the $x^ny^m$ basis and using Formulas (\ref{rxrydagger}) and (\ref{evdagger}).
\end{proof}

\begin{rema}
 In the limit $q\to 1$, for which the $q$-derivative $D_{q^2}$ becomes the usual derivative, the operators  $\operatorname{q-RC}^{\lambda,\lambda'}_n$ become the classical Rankin--Cohen brackets \cite{LabrietPoulain24}.
\end{rema}

\paragraph{Adjoints and symbols.} A crucial step of the calculation above is when calculating the adjoint of the operator denoted $r_P$, which is the right multiplication in $\C_q[x,y]$ by a polynomial $P(x,y)$. As mentioned during the proof, the action of $r^{\dagger}_P$ is given on $\C[x]\otimes \C[y]$ by:
\begin{equation}\label{rpdagger}
r^{\dagger}_P=P(r_x^\dagger,r_y^{\dagger})\ .\end{equation}
So in words, the action of $r_P^{\dagger}$ is obtained by taking $P$ and replacing $x$ by $r_x^\dagger$ and $y$ by $r_y^{\dagger}$. The fact expressed in (\ref{rpdagger}) is valid whatever the inner product is, but of course the explicit expressions for $r_x^\dagger$ and $r_y^\dagger$ depend on the chosen inner product.

With our inner product, moving between a polynomial $P(x,y)$ and the operator $P(r_x^\dagger,r_y^{\dagger})$ is a $q$-deformation of the notion of symbol of a differential operator. Indeed, when $q=1$, then $r_x^\dagger$ becomes the usual derivative with respect to $x$ and similarly for $y$, namely, we have
\[(q=1)\qquad\ \ \ \ \ r^{\dagger}_P=P\left(\frac{\partial}{\partial x},\frac{\partial}{\partial y}\right)\ .\]
For the $q$-deformed setting, denote again $T_{q^2}$ the operator on $\C[x]$ given by $T_{q^2}f(x)=f(q^2x)$. Then the explicit formula for $r_P^{\dagger}$ is:
\begin{equation}\label{qsymbol1}
r_P^{\dagger}=P(D_{q^2}\otimes T_{q^2},1\otimes D_{q^2}).\end{equation}
Note that this is well-defined in the sense that we evaluate $P\in \C_q[x,y]$ in two operators which $q$-commute, i.e.
\[(1\otimes D_{q^2})(D_{q^2}\otimes T_{q^2})=q^2(D_{q^2}\otimes T_{q^2})(1\otimes D_{q^2})\ .\]

\begin{rema}
We insist that all the above, including the explicit formula (\ref{eq:qRankinCohenV1}) depends on the choice of inner products on the various spaces under consideration. To generalise naturally the case $q=1$, we require that these inner products go when $q=1$ to the usual Fischer inner product for which the multiplication by $x$ is adjoint to the derivative.

In the following, we mention another interesting choice of inner product. 
On the one-variable polynomial space, we choose the same inner product:
\[\langle P(z),Q(z)\rangle=\left.P(D_{q^2})\overline{Q}\right|_{z=0}\]
However on the quantum plane $\C_q[x,y]$ we define the following inner product:
    \[ 
    \langle     P(x,y),Q(x,y)\rangle'=\left.P(D_{q^{2},x},D_{q^{2},y})\overline{Q(x,y)}\right|_{x=y=0}\,, 
    \]
where we define operators $D_{q^{2},x}$ and
$D_{q^{2},y}$ on $\C_q[x,y]$ as follows
\begin{equation}\label{actionCqxy}
D_{q^{2},x}x^ny^m=q^{n-1}[n]x^{n-1}y^m\ \ \ \quad\text{and}\quad\ \ \ D_{q^{2},y}x^ny^m=q^{m-1-2n}[m]x^{n}y^{m-1}\ .\end{equation}
These formulas says that $D_{q^{2},x}$ and $D_{q^{2},y}$ act as usual on their respective variables. However the action of $D_{q^{2},y}$ on $x^ny^m$ is understood as follows: first move the $y^m$ to the left, then differentiate and then move back the remaining $y^{m-1}$ to the right using the $q$-commutation relation.

A word of warning here: this inner product is not the same as the product of one-variable inner products. In particular, the isomorphism relating the $U_q(sl_2)$-representations $(V_\lambda\otimes V_{\lambda'})^\star$ and $V_{\lambda'}^\star\otimes V_\lambda^\star$ will not be the trivial isomorphism (but will be a renormalisation of the product basis of $V_\lambda\otimes V_{\lambda'}$).

One motivation to consider this alternative is that for $P\in \C_q[x,y]$ we can interpret the symbol of $r_{P}^\dagger$ in a similar fashion as in the classical setting since now we will have:
\[
r_P^\dagger =P(D_{q^{2},x},D_{q^{2},y}).
\] 
This formula may look more natural than (\ref{qsymbol1}) but one has to remember to act on $\C_q[x,y]$ according to Formulas (\ref{actionCqxy}). Note again that  the evaluation above in $r_P^{\dagger}$ is well-defined since $D_{q^{2},x}$ and $D_{q^{2},y}$ $q$-commute when acting on $\C_q[x,y]$.

These choices lead to an operator $\left(\Psi_n^{\lambda,\lambda'}\right)^\dagger$ acting on the quantum plane $\C_q[x,y]$ with value in $\C[z]$ whose explicit formula will be different from \eqref{eq:qRankinCohenV1} since different identifications are made. We leave the interested reader to work out the explicit formula.
\end{rema}
    
\begin{rema}
As a final remark, we note that several possibilities exist for the $q$-derivative. We have chosen to work with $D_{q^2}$ but one could have made a different choice, such as $D_{q^{-2}}$ or $D_q$. The resulting explicit formulas will differ by powers of $q$.
\end{rema}



\begin{thebibliography}{99}

\bibitem{BMVZ20} P. Baseilhac, X. Martin, L. Vinet, A. Zhedanov, \emph{Little and big q-Jacobi polynomials and the Askey--Wilson algebra}.
Ramanujan J. 51, No. 3, 629--648 (2020). 

\bibitem{Cohen75} H. Cohen, \emph{Sums involving the values at negative integers of L-functions of
quadratic characters}, Math. Ann., 217:271–285, 1975.

\bibitem{CFGPRV} N. Crampé, L. Frappat, J. Gaboriaud, L. Poulain d'Andecy, E. Ragoucy, L. Vinet, \emph{The Askey--Wilson algebra and its avatars}, J. Phys. A54(2021), no.6, Paper No. 063001, 32 pp.

\bibitem{GranovskiZhedanov92} Y. I. Granovskiĭ and A. S. Zhedanov, \emph{“Twisted” Clebsch--Gordan coefficients for $SU_q(2)$}, J.
Phys. A 25 (1992), no. 17, L1029–L1032. 

\bibitem{Huang18} H. Huang, \emph{An algebra behind the Clebsch--Gordan coefficients of $U_q(sl_2)$}, J. Algebra 496, 61-90 (2018). 

\bibitem{KKP16} T. Kobayashi, T. Kubo, and M. Pevzner, \emph{Conformal symmetry breaking operators for differential forms
on spheres}, volume 2170. Singapore: Springer, 2016.

\bibitem{KobaPevznerI} T. Kobayashi, M. Pevzner, \emph{Differential symmetry breaking operators. I: General theory and F-method}. Sel. Math., New Ser. 22, No. 2, 801-845 (2016). 

\bibitem{KobaPevznerII}  T. Kobayashi, M. Pevzner,\emph{ Differential symmetry breaking operators. II: Rankin--Cohen operators for symmetric pairs}. Sel. Math., New Ser. 22, No. 2, 847-911 (2016). 

\bibitem{KobaPevznerIII} T. Kobayashi, M. Pevzner, \emph{Inversion of Rankin--Cohen operators via holographic transform.} 
Ann. Inst. Fourier (Grenoble) 70 (2020), no. 5, 2131--2190.

\bibitem{KobaSpehI} T. Kobayashi, B. Speh, \emph{Symmetry breaking for representations of rank one orthogonal groups}, volume 1126 of Mem. Am. Math. Soc. Providence, RI: American Mathematical Society (AMS), 2015.

\bibitem{KobaSpehII} T. Kobayashi, B. Speh,\emph{ Symmetry breaking for representations of rank one orthogonal groups. II},
volume 2234. Singapore: Springer, 2018.

\bibitem{KLS10} R. Koekoek, P.A. Lesky and R.F. Swarttouw, \emph{Hypergeometric orthogonal polynomials and
their q-analogues}, Springer Monographs in Mathematics, Springer Berlin, Heidelberg (2010).

\bibitem{KVJ98} H.T. Koelink, J. Van Der Jeugt, \emph{Convolutions for orthogonal polynomials from Lie and quantum algebra representations}, SIAM J. Math. Anal. 29 (1998), no. 3, 794--822.

\bibitem{Koo90} T.H. Koornwinder, \emph{Orthogonal polynomials in connection with quantum groups}, Orthogonal polynomials (Columbus, OH, 1989), 257--292. NATO Adv. Sci. Inst. Ser. C: Math. Phys. Sci., 294 Kluwer Academic Publishers Group, Dordrecht, 1990.

\bibitem{KuboOrsted24} T. Kubo and B. Ørsted, \emph{On the intertwining differential operators from a line bundle to a vector bundle over the real projective space}, Indag. Math., New Ser. 36, No. 1, 270--301 (2025). 


\bibitem{LabrietPoulain23} Q. Labriet, L. Poulain d'Andecy, \emph{Realisations of Racah algebras using Jacobi operators and convolution identities}, Adv. in Appl. Math. 153 (2024), Paper No. 102620, 31 pp.

\bibitem{LabrietPoulain24} Q. Labriet, L. Poulain d'Andecy, \emph{Identities for Rankin--Cohen brackets, Racah coefficients and associativity}. Lett. Math. Phys. 114 (2024), no. 1, Paper No. 20, 17 pp.

\bibitem{P-V23} V. Pérez-Valdés, \emph{Conformally covariant differential symmetry breaking operators for a vector bundle of
rank $3$ on $S^3$}. International Journal of Mathematics, 34(12), 2023.

\bibitem{Ros} H. Rosengren, \emph{Multivariable q-Hahn polynomials as coupling coefficients for quantum algebra representations}, Int. J. Math. Math. Sci.28(2001), no.6, 331--358.

\bibitem{Zhe91} A. S. Zhedanov, \emph{``Hidden symmetry'' of Askey--Wilson polynomials}, (Russian) Teoret. Mat. Fiz. 89 (1991), no. 2, 190–204; translation in Theoret. and Math. Phys. 89 (1991), no. 2, 1146--1157.


\end{thebibliography}
\end{document}